\documentclass[12pt]{article}
 \usepackage{a4, latexsym, amsfonts}
 \newtheorem{theorem}{Theorem}[section]
\newtheorem{lemma}{Lemma}[section]

 \newtheorem{proposition}{Proposition}[section]

\usepackage{amsfonts}
\usepackage{amsmath}
\usepackage{amssymb}


 
 \newenvironment{proof}{\trivlist
      \item[\hskip\labelsep
      {\itshape Proof.}]\normalfont}
      {\hspace*{\fill}$\Box$\endtrivlist}

\begin{document}

\vspace*{.2in}
\begin{center}{\large\bf On the transcendence of some infinite sums}\\\vspace*{.2in}
{\normalsize  Pingzhi Yuan$^*$\ Juan Li}

\end{center}

 \footnotetext { $*$ This author is responsible for  communications, and  supported by
  the Guangdong Provincial Natural Science Foundation (No. 8151027501000114) and NSF of China (No. 10571180).}

\vspace*{.1in}

\begin{abstract} In this paper we investigate the infinite convergent
sum $T=\sum_{n=0}^\infty\frac{P(n)}{Q(n)}$, where
  $P(x)\in\overline{\mathbb{Q}}[x]$, $Q(x)\in\mathbb{Q}[x]$ and $Q(x)$
  has only simple rational zeros. N.  Saradha and R.  Tijdeman have obtained sufficient and
  necessary conditions for the transcendence of $T$ if the degree
  of $Q(x)$ is 3. In this paper we give sufficient and
  necessary conditions for the transcendence of $T$ if the degree
  of $Q(x)$ is 4 and $Q(x)$ is reduced.

\vspace*{5pt}
 {\normalsize\bf Key words: }Transcendental numbers, algebraic
 numbers, infinite sums

 \vspace*{5pt}
 {\normalsize\bf MCS: } primary 11J81; secondary 11J86,11J91
 \end{abstract}

\section{\large Introduction }\label{sec_1}

\noindent In this paper we will investigate the transcendence of the
infinite convergent sum
$$T=\sum_{n=0}^\infty\frac{P(n)}{Q(n)},$$
where $P(x)\in\overline{\mathbb{Q}}[x]$, $Q(x)\in\mathbb{Q}[x]$ and
$Q(x)$
  has only simple rational zeros.   Owing to the reduction procedure
described in Tijdeman \cite{ti00,ti03}, we have $$T=A+S, \quad
S=\sum_{n=1}^\infty\frac{f(n)}{n},$$  where
$A\in\overline{\mathbb{Q}}$,  we take $q>1$ to be a positive integer
and $f(x)$ is  a number theoretic function which is periodic mod $q$
with $\sum_{i=1}^{q}f(i)=0$, which we will assume throughout the
paper.

About forty years ago,  Chowla \cite{ch52} and Erd\H os (see
\cite{li65}) formulated some  conjectures related to whether there
exists a rational-valued function $f(n)$ periodic with prime period
$p$ such that $\sum_{n=1}^\infty\frac{f(n)}{n}=0.$ One of the
conjectures was proved by Baker, Birch and Wirsing \cite{bbw73} in
1973. They used Baker's theory on linear forms in logarithms to
establish that $S\neq0$ if $f(n)$ is a non-vanishing function
defined on the integers with rational values and period $q$ such
that

i) $f(r)=0,\  \mathrm{if}\ 1<\mathrm{gcd}(r,q)<q$,

ii) the cyclotomic polynomial $\Phi_q$ is irreducible over $
\mathbb{Q}(f(1),\cdots,f(q))$.

\noindent They further showed that their result would be false if i)
or ii)
  is omitted (see \cite{bbw73}).

  In 1982, T. Okada \cite{ok82} established a result which provides a description of all functions for which ii) holds and $S = 0$.
  Okada's proof depends on the basic result on the linear independence
  of the logarithms of algebraic numbers and on the non-vanishing of
  $L(1,\chi)=\sum_{n=1}^{\infty}\frac{\chi(n)}{n}$ if $\chi$ is a non-principal Dirichlet character. The
  precise result is stated in Section 2.

    In 2001, S.D. Adhikari, N. Saradha, T.N. Shorey and
R. Tijdeman \cite{asst01} proved that if $S\neq0$, then $S$ is
transcendental. They used this result to prove that if $P(x)\in
\overline{\mathbb{Q}}[x]$ and $Q(x)\in\mathbb{Q}[x]$, where $Q(x)$
is a polynomial with simple rational roots which are all in the
interval $[-1, 0)$, then the infinite convergent sum
$T=\sum_{n=0}^\infty\frac{P(n)}{Q(n)}$ is $0$ or transcendental.
Further, if $Q(x)$ is a polynomial with simple rational roots, then
$T$ is a computable rational number or a transcendental number. For
more information on the developments sketched above we refer to
\cite{ad00} and \cite{ti00, ti03}. In particular, if the degree of
$Q(x)$ is 2, then
$$T=\sum_{n=0}^\infty\frac{\alpha }{(qn+s_1)(qn+s_2)}$$with
 $q, \ s_1,\ s_2$ integers, $\alpha\in\overline{\mathbb{Q}}$
 nonzero,
is transcendental if and only if $s_1\not\equiv s_2\ (\mathrm{mod}\
q)$. On the other hand, by above results, it is easy to see that
$$\sum_{n=0}^\infty\frac{1}{(3n+1)(3n+2)(3n+3)}> 0$$ and
$$\sum_{n=0}^\infty\frac{1}{(n+1)(2n+1)(4n+1)}=\frac{\pi}{3}$$  are
transcendental. The second equality was also proven by Lehmer
\cite{lm01} in 1975.

     In 2003, N. Saradha and R. Tijdeman \cite{st03}
rephrased Okada's theorem so that it becomes a decomposition lemma
and gave sufficient and necessary conditions for the transcendence
of $T=\sum_{n=0}^\infty\frac{P(n)}{Q(n)}$ if the degree of $Q(x)$ is
3. They proved that
$$T=\sum_{n=0}^\infty\frac{\alpha n+\beta
}{(qn+s_1)(qn+s_2)(qn+s_3)}$$is transcendental if $s_1, s_2, s_3$
are not in the same residue class mod $q$.
 However, when the degree of $Q(x)$ is 4, the example
$$T=\sum_{n=0}^\infty
  \frac{16n^2+12n-1}{(4n+1)(4n+2)(4n+3)(4n+4)}=0$$ shows
  that the corresponding result is not valid.

The main purpose of the present paper is to give sufficient and
necessary conditions for the transcendence of $T$ if the degree of
$Q(x)$ is 4, that is
\begin{equation}
T=\sum_{n=0}^\infty
  \frac{\alpha n^2+\beta
  n+\gamma}{(qn+s_1)(qn+s_2)(qn+s_3)(qn+s_4)}
\end{equation}
where $\alpha, \beta, \gamma\in \overline{\mathbb{Q}}$,  $s_1, s_2,
s_3, s_4 $ are distinct integers. By the reduction procedure
described in Tijdeman \cite{ti00,ti03}, without loss of generality,
we may assume that $0<s_1, s_2, s_3, s_4\leq q$,
$\mathrm{gcd}(\alpha x^2+\beta
  x+\gamma, (qx+s_1)(qx+s_2)(qx+s_3)(qx+s_4))=1$ and $\mathrm{gcd}(s_{1},s_2,s_3, s_{4},
q)=1$ throughout the paper. The following  simple example shows how
the reduction procedure works,
$$
\sum_{n=0}^{\infty}\frac{1}{(2n+1)(2n+2)(2n+3)}=-\frac{1}{2}+\sum_{n=0}^{\infty}\{\frac{1}{2n+1}-\frac{1}{2n+2}\}
=-\frac{1}{2}+\sum_{n=0}^\infty\frac{1}{(2n+1)(2n+2)}.$$

In Section 2 we shall give some preliminaries  that will be useful
for our further discussions.   In Section 3 we prove the following
Theorem.

\begin{theorem} Let

$$T=\sum_{n=0}^\infty
  \frac{\alpha n^2+\beta
  n+\gamma}{(qn+s_1)(qn+s_2)(qn+s_3)(qn+s_4)}$$
where  $s_1, s_2, s_3, s_4 $ are distinct positive integers $ \leq q
$ and $\alpha, \beta, \gamma \in \overline{\mathbb{Q}}$. Suppose
 $\mathrm{gcd}(\alpha x^2+\beta
  x+\gamma, (qx+s_1)(qx+s_2)(qx+s_3)(qx+s_4))=1$ , $\mathrm{gcd}(s_{1},s_2,s_3, s_{4},
q)=1$ and $\Phi_q$ is irreducible over
  $\mathbb{Q}(\alpha,\beta, \gamma)$. Then $T$ is transcendental
  except when
\begin{equation}T=\sum_{n=0}^\infty
  \frac{16n^2+12n-1}{(4n+1)(4n+2)(4n+3)(4n+4)}=0\end{equation}
  or \begin{equation}T=
\sum_{n=0}^{\infty}\frac{36n^{2}+36n-1}{(6n+1)(6n+2)(6n+4)(6n+5)}=0.\end{equation}
  \end{theorem}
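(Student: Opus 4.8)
The plan is to turn the transcendence question into a question about the vanishing of a series $S=\sum_{n=1}^\infty f(n)/n$, and then to decide that vanishing by the criterion of Okada recorded in Section~2. First I would write the partial fraction decomposition
$$\frac{\alpha n^2+\beta n+\gamma}{(qn+s_1)(qn+s_2)(qn+s_3)(qn+s_4)}=\sum_{j=1}^4\frac{a_j}{qn+s_j},$$
where each $a_j$ is the value of $\frac{\alpha t^2+\beta t+\gamma}{\prod_{k\neq j}(s_k-s_j)}$ at $t=-s_j/q$, hence a $\mathbb{Q}$-linear form in $\alpha,\beta,\gamma$ whose coefficients are rational in $q,s_1,\dots,s_4$. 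Because $\deg(\alpha x^2+\beta x+\gamma)\le 2=\deg Q-2$, the relation $\sum_{j=1}^4 a_j=0$ holds automatically, which guarantees convergence and makes the associated periodic function mean-zero. Since $0<s_j\le q$, the numbers $qn+s_j$ ($n\ge 0$) exhaust, without repetition or omission, the positive integers in the residue classes $s_1,\dots,s_4$ modulo $q$; so, defining $f$ to be periodic mod $q$ with $f(r)=a_j$ when $r\equiv s_j$ and $f(r)=0$ otherwise, we obtain exactly
$$T=\sum_{n=1}^\infty\frac{f(n)}{n}=S,$$
with no algebraic part to carry along.

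Two reductions then do the main work. The map $(\alpha,\beta,\gamma)\mapsto(a_1,a_2,a_3,a_4)$ is injective (the $s_j$ are distinct), so $\mathbb{Q}(\alpha,\beta,\gamma)=\mathbb{Q}(a_1,\dots,a_4)=\mathbb{Q}(f(1),\dots,f(q))$ and the hypothesis that $\Phi_q$ is irreducible over $\mathbb{Q}(\alpha,\beta,\gamma)$ is precisely condition ii). By the theorem of Adhikari, Saradha, Shorey and Tijdeman, $S$ is either $0$ or transcendental, so $T$ is transcendental unless $S=0$; it remains to find all $(q;s_1,s_2,s_3,s_4;\alpha,\beta,\gamma)$ with $S=0$. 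For this I would invoke Okada's theorem from Section~2, which under condition ii) translates $S=0$ into a finite system of homogeneous $\mathbb{Q}$-linear conditions on the values $f(a)$, grouped by the primes $p\mid q$ and together with $f(q)=0$. Substituting the linear forms $a_j(\alpha,\beta,\gamma)$ converts this into a homogeneous linear system in $(\alpha,\beta,\gamma)$, which has a nonzero solution if and only if it is degenerate.

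The core of the argument is then a case analysis on $q$ and on how the four residues $s_1,\dots,s_4$ sit inside the prime-power structure of $q$. The leverage is a counting mismatch: there are only three free parameters $\alpha,\beta,\gamma$, whereas Okada's conditions proliferate as $q$ acquires more prime factors or higher prime powers, and as the $s_j$ fail to be confined to few classes modulo $q/p$. I expect to show that outside a short explicit list of configurations the conditions already force $(\alpha,\beta,\gamma)=0$; on that list only $q=4$ with $\{s_1,\dots,s_4\}=\{1,2,3,4\}$ and $q=6$ with $\{s_1,\dots,s_4\}=\{1,2,4,5\}$ leave a one-dimensional solution space. In the first case the solution is $(a_1,a_2,a_3,a_4)=c(1,-3,1,1)$, which after clearing denominators returns the numerator $16n^2+12n-1$ of $(2)$; in the second it is $(a_1,a_2,a_3,a_4)=c(-1,3,-3,1)$ (an odd pattern $f(6-r)=-f(r)$), returning $36n^2+36n-1$ of $(3)$. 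Finally I would confirm $T=0$ in these two cases directly, either by checking that these $a_j$ satisfy Okada's equations or by an elementary telescoping/digamma evaluation.

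The step I expect to be the main obstacle is exactly this case analysis: one must argue uniformly in $q$ that no exotic large modulus slips through, which requires organizing Okada's relations according to the $p$-adic valuations of $q$ and of the $s_j$, and showing the resulting constraints are independent enough to annihilate the three parameters outside the two exceptional patterns. Handling the interaction between distinct primes dividing $q$—distinguishing the prime, prime-power, and multi-prime moduli, and the degenerate subcases where several $s_j$ share a residue modulo $q/p$—is where the bookkeeping will be heaviest.
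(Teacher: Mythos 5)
Your setup is correct and coincides with the paper's: partial fractions give $T=\sum_j a_j/(qn+s_j)$ with $\sum_j a_j=0$ forced by $\deg P\le \deg Q-2$, the normalization $0<s_j\le q$ makes $T=\sum_{n\ge 1}f(n)/n$ with no algebraic remainder, $\mathbb{Q}(\alpha,\beta,\gamma)=\mathbb{Q}(a_1,\dots,a_4)$ transfers the irreducibility hypothesis, and Theorem B reduces everything to classifying when $S=0$, which Okada's criterion (or its Decomposition Lemma form) governs. The two exceptional solution vectors you name, $(1,-3,1,1)$ for $q=4$ and $(-1,3,-3,1)$ for $q=6$, are also exactly what the paper finds.

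The genuine gap is that the classification itself --- which is the entire content of the paper's Section 3, occupying Propositions 3.1 through 3.4 --- is not carried out, and the mechanism you propose for it would not suffice as stated. You describe the problem as checking degeneracy of a homogeneous linear system in $(\alpha,\beta,\gamma)$ and appeal to a ``counting mismatch'' between three parameters and a proliferating set of conditions. For each fixed $q$ this is indeed a finite linear-algebra problem, but the theorem quantifies over all $q$ and all choices of four residues, so one must argue uniformly; the number of conditions in (6) is $\varphi(q)$, but most of them are trivially satisfied (both sides vanish) for residues not hit by the orbit of $\{s_1,\dots,s_4\}$ under multiplication by elements of $M$, so no dimension count closes the argument. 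The paper instead organizes the analysis by $\rho$, the number of $s_i$ coprime to $q$, and uses three specific tools you do not mention: Lemma 2.3 to manufacture several units $a\equiv s_3+i\,q/d \pmod q$ at which (6) forces $f$ to take the same nonzero value (impossible when $\varphi(d)>2$, leaving only $d\in\{2,3,4,6\}$ to treat by hand); Lemma 2.2, i.e.\ $\sum_n f(kn)/n=0$ for $\gcd(k,q)=1$, whose difference with $T$ kills the residues fixed by multiplication by $k$ and produces a new vanishing sum $T'$ with fewer ``active'' terms; and, in the $\rho=0$ case, a recursion of this differencing down to the degree-3 situation settled by Theorem C and back to Proposition 3.2. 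Without an argument of this kind (or some substitute), the claim that ``outside a short explicit list of configurations the conditions already force $(\alpha,\beta,\gamma)=0$'' is an expectation, not a proof, and you have identified it yourself as the main obstacle. A small secondary slip: Okada's criterion does not include the condition $f(q)=0$; the value $f(q)$ enters condition (4) through the term $f(q)/\varphi(q)$ and enters (5) through $\varepsilon(q,p)$, and in the relevant configurations $q$ may well be one of the $s_j$ (it is, in the $q=4$ exceptional case).
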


\section{\large Preliminaries}\label{sec_2}
\noindent In this section we shall introduce some notations and
state the related results that will be needed in the sequel. We
denote by $\varphi(n)$ the Euler function and $P$ the set of  all
primes dividing $q$. We call the polynomial $Q(x)$ reduced if
$Q(x)\in\mathbb{Q}[x]$ and it has only simple rational zeros which
are all in the interval $[-1, 0)$. We denote by $v_{p}(n)$ the
exponent to which $p|n$ for  any prime $p$ and $n\in \mathbb{Z}$. We
write
$$J=\{a\in \mathbb{Z}\ |\ 1\leq a\leq q,\ \mathrm{gcd}(a,q)=1\},$$
$$L=\{r\in \mathbb{Z}\ |\ 1\leq r\leq q,\ 1<\mathrm{gcd}(r,q)<q\},$$
and $$L^{'}=L\cup\{q\}.$$
 For $p\in P$ and $r\in L^{'}$, we
define $$P(r)=\{p\in P\ |\ v_{p}(r)\geq v_{p}(q)\}$$ and

\begin{eqnarray*}\label{eqn-ghf2}
 \varepsilon(r,p)= \left\{ \begin{array}{ll}
         v_{p}(q)+\frac{1}{p-1},  & p\in P(r),  \\
         v_{p}(r),  & \mbox{ otherwise } .
        \end{array}
\right.
\end{eqnarray*}
For  $r\in L^{'}$ and $a\in J$, we define
$$A(r,a)=\frac{1}{\mathrm{gcd}(r,q)}\prod_{p\in P(r)}(1-\frac{1}{p^{\varphi(q)}})^{-1}\sum_{n\in S(r)}\frac{\sigma(r,a,n)}{n},$$
where $$S(r)=\{\prod_{p\in P(r)}p^{\alpha(p)}\ |\
0\leq\alpha(p)<\varphi(q)\}$$ and

\begin{eqnarray*}\label{eqn-ghf2}
 \sigma(r,a,n)= \left\{ \begin{array}{ll}
         1,  & \mbox { if }\quad  r\equiv an\gcd(r,q)\pmod{q},  \\
         0,  & \mbox{ otherwise } .
        \end{array}
\right.
\end{eqnarray*}

\noindent{\bf Theorem A.} (Okada \cite{ok82}). {\it  If} $\Phi_q$
{\it is irreducible over}
  $\mathbb{Q}(f(1),\cdots, f(q))$, {\it then} $S=\sum_{n=1}^\infty\frac{f(n)}{n}=0$  {\it if and only
  if}
\begin{equation} f(a)+\sum_{r\in
L}f(r)A(r,a)+\frac{f(q)}{\varphi(q)}=0, \qquad \ for\ all\ a\in
J,\end{equation} {\it and}
\begin{equation} \sum_{r\in
L^{'}}f(r)\varepsilon(r,p)=0, \qquad \ for\ all\ p\in P.
\end{equation}

   N. Saradha and R. Tijdeman \cite{st03} estabished an
equivalent version of Theorem A.

 \begin{lemma} {\rm (Decomposition
Lemma \cite{st03} )}.  Let $\Phi_q$  be irreducible over
  $\mathbb{Q}(f(1),\cdots, f(q))$.   Let M be the set of positive
integers which are composed of prime factors of $q$.   Then
$S=\sum_{n=1}^\infty\frac{f(n)}{n}=0$   if and only if
\begin{equation}\sum_{m\in M}\frac{f(am)}{m}=0, \qquad
for\ all\ a\in J,
\end{equation}
   and
$$\sum_{r\in L^{'}}f(r)\varepsilon(r,p)=0, \qquad \ for\ all\ p\in
P.$$
\end{lemma}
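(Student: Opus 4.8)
The plan is to deduce the Decomposition Lemma directly from Theorem A by showing that the two systems of conditions are equivalent. Observe first that the second condition of each statement is literally identical, namely $\sum_{r\in L'}f(r)\varepsilon(r,p)=0$ for all $p\in P$. Hence it suffices to show that, for every $a\in J$, the first condition of Theorem A,
$$f(a)+\sum_{r\in L}f(r)A(r,a)+\frac{f(q)}{\varphi(q)}=0,$$
is equivalent to the corresponding condition $\sum_{m\in M}\frac{f(am)}{m}=0$. I would in fact establish the stronger pointwise identity
$$\sum_{m\in M}\frac{f(am)}{m}=f(a)+\sum_{r\in L}f(r)A(r,a)+\frac{f(q)}{\varphi(q)},\qquad a\in J,$$
from which the equivalence is immediate. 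Since $\sum_{m\in M}\frac1m=\prod_{p\in P}(1-\frac1p)^{-1}$ is a finite product and therefore converges, the left-hand series is absolutely convergent and may be regrouped freely.

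To prove the identity I would partition $M$ according to the residue $r$ of $am$ modulo $q$, taken in $\{1,\dots,q\}$, using $f(am)=f(r)$ by periodicity. Because $\gcd(a,q)=1$ we have $\gcd(am,q)=\gcd(m,q)$, so $r$ satisfies $\gcd(r,q)=\gcd(m,q)$. The only $m\in M$ coprime to $q$ is $m=1$, which contributes exactly $f(a)$. The terms with $q\mid m$ all give $r=q$ and contribute $f(q)\sum_{m\in M,\,q\mid m}\frac1m$; here a short computation yields $\sum_{m\in M,\,q\mid m}\frac1m=\frac1q\prod_{p\in P}\frac{p}{p-1}=\frac1{\varphi(q)}$, matching the term $f(q)/\varphi(q)$. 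The remaining terms all have $1<\gcd(m,q)<q$, hence $r\in L$, and group as $\sum_{r\in L}f(r)\sum_{m\in M,\,am\equiv r\,(q)}\frac1m$. Thus the whole identity reduces to the single claim that, for each $r\in L$ and $a\in J$,
$$\sum_{\substack{m\in M\\ am\equiv r\pmod q}}\frac1m=A(r,a).$$

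For this claim I would analyze the exponents of $m=\prod_{p\in P}p^{e_p}$ prime by prime. Matching gcd's forces $e_p=v_p(r)$ for each $p\notin P(r)$ (the ``pinned'' primes), whereas $e_p$ may be any integer $\ge v_p(q)$ for $p\in P(r)$ (the ``free'' primes); the pinned exponents fix the factor $\prod_{p\notin P(r)}p^{-v_p(r)}$, which combines with the free primes to produce the prefactor $1/\gcd(r,q)$. For the free primes one checks, using $\varphi(p^{v_p(q)})\mid\varphi(q)$ together with Euler's theorem, that the residue of $m$ modulo $q$—and hence whether $am\equiv r$ holds—depends only on the exponents $e_p$ modulo $\varphi(q)$. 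Summing $1/m$ over each free exponent along its arithmetic progression of common difference $\varphi(q)$ produces a geometric series contributing $(1-p^{-\varphi(q)})^{-1}$, while the residue patterns themselves are indexed by $n\in S(r)$, the admissible ones being exactly those with $\sigma(r,a,n)=1$. Assembling these factors reproduces $A(r,a)$.

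The main obstacle is precisely this last bookkeeping step. The congruence $am\equiv r\pmod q$ does not split into independent conditions on the individual exponents $e_p$, because modulo each prime power $p^{v_p(q)}$ the value of $m$ involves the prime-to-$p$ part of $m$ as a unit, coupling the free exponents together. Showing that this coupling is nonetheless governed solely by the exponents modulo $\varphi(q)$, and that it is captured exactly by the indicator $\sigma(r,a,n)$ summed over the representative set $S(r)$, is where the definitions of $P(r)$, $S(r)$, and $\sigma$ are essential, and is the part demanding the most care. Once the identity is in hand, the equivalence of the first conditions follows termwise, and together with the shared second condition it yields the Lemma.
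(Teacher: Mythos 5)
Your proposal is correct, but there is nothing in the paper to compare it against: the paper never proves Lemma 2.1, it quotes it from Saradha--Tijdeman \cite{st03} and merely describes it as ``an equivalent version of Theorem A.'' Your route --- deducing the lemma from Okada's Theorem A via the pointwise identity
$$\sum_{m\in M}\frac{f(am)}{m}=f(a)+\sum_{r\in L}f(r)A(r,a)+\frac{f(q)}{\varphi(q)},\qquad a\in J,$$
--- is exactly the equivalence the paper alludes to, and your intermediate steps check out: $m=1$ is the only element of $M$ coprime to $q$ and contributes $f(a)$; the terms with $q\mid m$ contribute $f(q)\cdot\frac1q\prod_{p\in P}(1-\frac1p)^{-1}=\frac{f(q)}{\varphi(q)}$; all remaining terms fall into classes $r\in L$ because $\gcd(am,q)=\gcd(m,q)$; and absolute convergence justifies the regrouping. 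Since the identity is termwise in $a$ and the second conditions of Theorem A and of the lemma are literally the same, the equivalence follows.

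The step you flag as the main obstacle does close, and the clean way to finish it is to change the modulus rather than work modulo $q$ directly. Put $d=\gcd(r,q)$. As you note, $am\equiv r\pmod q$ forces $e_p=v_p(r)$ for $p\notin P(r)$ and $e_p\geq v_p(q)$ for $p\in P(r)$, so $m=dt$ with $t=\prod_{p\in P(r)}p^{e_p-v_p(q)}$, and the congruence is equivalent to $at\equiv r/d\pmod{q/d}$. The coupling you worry about is now harmless: $v_p(q/d)=0$ for every $p\in P(r)$, so the modulus $q/d$ is coprime to all the free primes, and $\varphi(q/d)\mid\varphi(q)$ together with Euler's theorem gives $p^{\varphi(q)}\equiv 1\pmod{q/d}$ for each of them. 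Hence $t\bmod{q/d}$ --- and with it the truth of the congruence --- is constant on each class of free-exponent tuples taken modulo $\varphi(q)$; the classes are indexed by $n\in S(r)$, a class is admissible precisely when $an\equiv r/d\pmod{q/d}$, which is exactly the condition $\sigma(r,a,n)=1$, and the sum of $1/m$ over one class equals $\frac{1}{dn}\prod_{p\in P(r)}(1-p^{-\varphi(q)})^{-1}$ by summing a geometric series prime by prime. Summing over admissible classes yields $\sum_{m\in M,\ am\equiv r\ (\mathrm{mod}\ q)}\frac1m=A(r,a)$, including the degenerate case where no class is admissible and both sides vanish. One caution for the write-up: the assertion that ``the residue of $m$ modulo $q$ depends only on the exponents $e_p$ modulo $\varphi(q)$'' is valid only after the gcd-matching constraints are imposed (it fails for general $m\in M$, since multiplying by $p^{\varphi(q)}$ with $p\mid q$ changes $m$ modulo $q$); passing to the modulus $q/d$ as above is the safest formulation.
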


As a consequence of Lemma 2.1, they derived the following result.

\begin{lemma} {\rm (\cite{st03})}   Let $\Phi_q$  be irreducible
over
  $\mathbb{Q}(f(1),\cdots, f(q))$.  Suppose $S=\sum_{n=1}^\infty\frac{f(n)}{n}=0$.
  Then
$$\sum_{n=1}^{\infty}\frac{f(kn)}{n}=0,\ for\ every \ k \ with  \ \mathrm{gcd}(k,q)=1.$$
\end{lemma}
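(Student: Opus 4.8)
The plan is to prove Lemma 2.2 as a direct consequence of the Decomposition Lemma (Lemma 2.1). The key observation is that the hypothesis $\gcd(k,q)=1$ means $k$ is coprime to the modulus $q$, so multiplication by $k$ should act as a permutation on the relevant residue classes, and the sum $\sum_{n=1}^\infty \frac{f(kn)}{n}$ ought to decompose exactly like $S$ itself. I would first split the index $n$ in the target sum according to its interaction with the primes dividing $q$: write each positive integer $n$ uniquely as $n = am$ where $m \in M$ (the part of $n$ composed of primes dividing $q$) and $\gcd(a,q)=1$. This is the same multiplicative decomposition that underlies the Decomposition Lemma, so the natural first step is to reorganize $\sum_{n=1}^\infty \frac{f(kn)}{n}$ along these two coordinates.

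The main work will be to verify that condition (2.6) from Lemma 2.1 — namely $\sum_{m\in M}\frac{f(bm)}{m}=0$ for all $b\in J$ — already delivers what we need, once we track how $k$ interacts with the $a$-coordinate. Concretely, after writing $n=am$, I expect
\[
\sum_{n=1}^\infty\frac{f(kn)}{n}=\sum_{a\in J}\frac{1}{a}\sum_{m\in M}\frac{f(kam)}{m},
\]
where the outer sum runs over the coprime residues (suitably normalized) and the inner sum is over the $q$-smooth part $M$. The crucial point is that for fixed $a$, the product $ka$ is again coprime to $q$, so $ka \equiv b \pmod q$ for some $b\in J$; since $f$ is periodic mod $q$ and $m\in M$ only shifts the argument by a $q$-smooth factor, the inner sum $\sum_{m\in M}\frac{f(kam)}{m}$ can be matched against $\sum_{m\in M}\frac{f(bm)}{m}$, which vanishes by (2.6). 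I would take care here to confirm that replacing $a$ by a representative of $ka \bmod q$ does not disturb the value of the inner sum, using that $f(kam)$ depends only on $kam \bmod q$ and that $M$ is closed under the relevant operations.

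The step I expect to be the main obstacle is the bookkeeping in the reindexing: I must be precise about what range the outer variable runs over and whether the factor $\frac{1}{a}$ versus $\frac{1}{n}$ introduces any discrepancy when I pass from the $n$-sum to the $(a,m)$-double sum. In particular, since the target sum has denominator $n=am$ rather than $kn$, I need to confirm that the $k$ appears only inside the argument of $f$ and not in the denominator, so that the vanishing of each inner sum $\sum_{m\in M}\frac{f(bm)}{m}=0$ forces the whole double sum to vanish term-by-term in $a$. Once that reindexing is justified, the conclusion is immediate: every inner sum is zero by the first condition of Lemma 2.1, hence $\sum_{n=1}^\infty\frac{f(kn)}{n}=0$. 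I do not expect to need the second condition (2.7) on $\varepsilon(r,p)$ at all, since multiplication by a unit $k$ leaves the $q$-smooth structure governing those terms unchanged; the whole argument rides on the first, multiplicative, condition together with the coprimality of $k$ and $q$.
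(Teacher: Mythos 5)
The paper itself does not reproduce a proof of this lemma (it is quoted from Saradha--Tijdeman), so your proposal has to stand on its own, and it has a genuine gap at exactly the point you wave away. The regrouping identity
$$\sum_{n=1}^\infty\frac{f(kn)}{n}=\sum_{\substack{a\ge1\\ \gcd(a,q)=1}}\frac{1}{a}\sum_{m\in M}\frac{f(kam)}{m}$$
rearranges a series that is only conditionally convergent, and it is false in general. If it were valid, then condition (6) alone (taking $k=1$) would already force $S=0$, whereas Lemma 2.1 is an equivalence that also demands condition (5). Concretely, take $q=4$, $f(2)=1$, $f(4)=-1$, $f(1)=f(3)=0$: then $\sum_{m\in M}\frac{f(am)}{m}=0$ for both $a=1,3$, yet $\sum_{n\ge1}\frac{f(n)}{n}=\frac12\ln 2\neq0$; correspondingly $\sum_{r\in L'}f(r)\varepsilon(r,2)=1\cdot1+(-1)\cdot3\neq0$. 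So ``every inner sum vanishes, hence the double sum vanishes'' does not determine the value of the original sum, and your explicit prediction that the second condition on $\varepsilon(r,p)$ will not be needed is precisely where the argument breaks: that condition is what controls the discrepancy between the iterated sum and the ordinary one.

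The repair is short and uses Lemma 2.1 in both directions, applied to the periodic function $g(n)=f(kn)$. Since $n\mapsto kn$ permutes residues mod $q$, the set of values of $g$ equals that of $f$, so the irreducibility hypothesis transfers to $g$. From $S=0$ and the ``only if'' direction of Lemma 2.1 you obtain (5) and (6) for $f$. Condition (6) for $g$ is exactly your computation: $ka\equiv b\pmod q$ with $b\in J$, whence $\sum_{m\in M}\frac{g(am)}{m}=\sum_{m\in M}\frac{f(bm)}{m}=0$. Condition (5) for $g$ holds because $r\mapsto kr\bmod q$ is a permutation of $L'$ satisfying $v_p(kr\bmod q)=v_p(r)$ for each $p\in P$ (as $\gcd(k,q)=1$), so $\varepsilon(\cdot,p)$ is preserved and $\sum_{r\in L'}g(r)\varepsilon(r,p)=\sum_{r'\in L'}f(r')\varepsilon(r',p)=0$. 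Now the ``if'' direction of Lemma 2.1 applied to $g$ gives $\sum_{n\ge1}\frac{f(kn)}{n}=0$.
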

 The following
result given by S.D. Adhikari, N. Saradha, T.N. Shorey and R.
Tijdeman \cite{asst01}   is essential for the transcendence of
$\sum_{n=0}^\infty\frac{P(n)}{Q(n)}$.

\noindent {\bf Theorem B.} (\cite{asst01}) {\it Let}
$P(x)\in\overline{\mathbb{Q}}[x]$,  {\it and let }
$Q(x)\in\mathbb{Q}[x]$ {\it  be reduced. If}
$$T=\sum_{n=0}^\infty\frac{P(n)}{Q(n)}$$
{\it converges, then } $T$ {\it is} $0$ {\it or transcendental.}

   When the degree of $Q(x)$ is
3, N. Saradha and R. Tijdeman \cite{st03} obtained necessary and
sufficient conditions for the transcendence of
$T=\sum_{n=0}^\infty\frac{P(n)}{Q(n)}$.

\noindent{\bf Theorem C.} (\cite{st03}) {\it Let }
$T=\sum_{n=0}^\infty\frac{\alpha n+\beta
}{(qn+s_1)(qn+s_2)(qn+s_3)}$, {\it where} $\alpha,\ \beta\in
\overline{\mathbb{Q}}$, {\it  and}
  $|\alpha|+|\beta|>0$. {\it  Let} $\Phi_q$ {\it be irreducible over}
  $\mathbb{Q}(\alpha,\beta)$ {\it  and } $s_{1}, s_{2}, s_{3}$ {\it be distinct integers such that }
  $qn+s_1,\ qn+s_2,\ qn+s_3$ {\it do not vanish for }$n\geq
  0$.
  {\it Assume that} $s_{1}, s_{2}, s_{3}$ {\it  are not in the same residue
  class} $\mathrm{mod}\ q$. {\it Further let} $s_{1}\not\equiv s_{2}\ (\mathrm{mod}\ q)$ {\it if }
  $\alpha s_{3}=\beta q$; $s_{1}\not\equiv s_{3}\ (\mathrm{mod}\ q)$ {\it  if} $\alpha s_{2}=\beta q$;
  $s_{2}\not\equiv s_{3}\ (\mathrm{mod}\ q)$ {\it if } $\alpha s_{1}=\beta q$.
 {\it  Then $T$ is transcendental.}

The following result in \cite{ks01} will be useful in Section 3. For
the convenience of the reader, we provide the sketch of a proof
suggested  by Frazer Jarvis.

\begin{lemma}   Let  $n, d$,  and $r$ be
integers such that  $n>1$, $d>0$, $d|n$,  and $\gcd(r,d)=1$,
 then there are precisely $\varphi(n)/\varphi(d)\geq \varphi(n/d)$
 numbers which are coprime to $n$
  in the  set $ S=\{r+td,  t=1,2,\cdots,\frac{n}{d} \}$.\end{lemma}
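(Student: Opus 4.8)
The plan is to translate the counting problem into the arithmetic of unit groups. First I would observe that as $t$ runs over $1,2,\ldots,n/d$, the numbers $r+td$ run over a complete set of representatives of exactly those residue classes modulo $n$ that are congruent to $r$ modulo $d$: there are precisely $n/d$ such classes, and $S$ has $n/d$ elements which are pairwise incongruent mod $n$, since $r+td\equiv r+t'd\pmod n$ with $1\le t,t'\le n/d$ forces $t=t'$. Moreover $\gcd(r+td,n)=1$ if and only if the class of $r+td$ lies in $(\mathbb{Z}/n\mathbb{Z})^{*}$. Hence the number to be computed is the number of units $x$ of $\mathbb{Z}/n\mathbb{Z}$ with $x\equiv r\pmod d$.

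Next I would consider the reduction homomorphism $\pi\colon(\mathbb{Z}/n\mathbb{Z})^{*}\to(\mathbb{Z}/d\mathbb{Z})^{*}$, which is well defined because $d\mid n$ (if $\gcd(x,n)=1$ then $\gcd(x,d)=1$). Since $\gcd(r,d)=1$, the class $\bar r$ is a unit mod $d$, and the numbers we are counting are exactly the elements of the fibre $\pi^{-1}(\bar r)$. The key step is to show $\pi$ is surjective: given a unit $a$ mod $d$, I would use the Chinese Remainder Theorem on the factorization $n=\prod_i p_i^{e_i}$ to build a lift $b$ coprime to $n$ with $b\equiv a\pmod d$ — for primes $p_i\mid d$ the congruence $b\equiv a\pmod{p_i^{e_i}}$ already forces $b$ coprime to $p_i$, and for primes $p_i\nmid d$ one is free to impose $b\equiv 1\pmod{p_i^{e_i}}$. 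Once surjectivity is in hand, every fibre is a coset of $\ker\pi$ and hence has cardinality $|(\mathbb{Z}/n\mathbb{Z})^{*}|/|(\mathbb{Z}/d\mathbb{Z})^{*}|=\varphi(n)/\varphi(d)$, which yields the exact count.

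Finally, for the inequality $\varphi(n)/\varphi(d)\ge\varphi(n/d)$ I would pass to the product formula $\varphi(m)=m\prod_{p\mid m}(1-1/p)$. Writing $m=n/d$ and cancelling the factors coming from primes dividing $d$ (all of which divide $n$) gives $\varphi(n)/\varphi(d)=m\prod_{p\mid n,\,p\nmid d}(1-1/p)$. Every prime dividing $n$ but not $d$ divides $m$, so the index set here is a subset of $\{p:p\mid m\}$; since each factor lies in $(0,1)$, omitting the remaining factors can only increase the product, whence $\varphi(n)/\varphi(d)\ge m\prod_{p\mid m}(1-1/p)=\varphi(n/d)$.

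The only genuine obstacle is the surjectivity of $\pi$; the reduction to a fibre, the coset counting, and the totient inequality are all routine. For the inequality one could instead invoke the standard estimate $\varphi(ab)\ge\varphi(a)\varphi(b)$ with $a=d$ and $b=n/d$, but the CRT lifting argument for surjectivity appears unavoidable, and that is where I would concentrate the exposition.
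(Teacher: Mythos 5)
Your proof is correct, but it is organized differently from the paper's. The paper (following a sketch of Frazer Jarvis) argues by a direct sieve count: for a prime $p\mid d$ coprimality of $r+td$ to $p$ is automatic from $\gcd(r,d)=1$; for a prime $p\mid n$ with $p\nmid d$ the values $r+td$, $t=1,\dots,n/d$, are equidistributed modulo $p$, so a fraction $1-\frac1p$ of them avoid $p$; the Chinese Remainder Theorem makes these local conditions independent, giving the count $\frac{n}{d}\prod_{p\mid n,\ p\nmid d}\bigl(1-\frac1p\bigr)$, which is then identified with $\varphi(n)/\varphi(d)$ and compared with $\varphi(n/d)$. You instead recognize the quantity as the cardinality of the fibre over $\bar r$ of the reduction homomorphism $\pi\colon(\mathbb{Z}/n\mathbb{Z})^{*}\to(\mathbb{Z}/d\mathbb{Z})^{*}$, prove $\pi$ surjective by a CRT lifting, and read off $\varphi(n)/\varphi(d)$ from the coset structure, reserving the product formula for the inequality alone. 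The arithmetic input is the same (CRT plus a prime-by-prime analysis), but your version is more structural and produces the exact value $\varphi(n)/\varphi(d)$ with no separate identification step, whereas the paper's version yields the explicit product $\frac{n}{d}\prod_{p\mid n,\ p\nmid d}\bigl(1-\frac1p\bigr)$ directly, from which both the equality and the inequality follow at once; your treatment of the inequality (dropping factors in $(0,1)$ indexed by primes dividing $n/d$ but not $n/\gcd$-relevant) coincides with what the paper leaves implicit. Both proofs are complete and correct.
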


\begin{proof}For primes $p|d$ there is no condition, but for primes
$p|n$ but $p\not|d$, the congruence classes for $r+td$ are equally
distributed mod $p$, so that $\frac{p-1}{p}$ of the possible numbers
are prime to $p$. The Chinese Remainder Theorem gives an
independence result. Since there are $\frac{n}{d}$ numbers
considered, the number we seek is
$$\frac{n}{d}\cdot\prod_{p|n, p\not|d}(1-\frac{1}{p}),$$
and the result easily follows.
\end{proof}

\section{\large   Proof of Theorem 1.1}\label{sec_3}
\noindent

Let

$$T=\sum_{n=0}^{\infty}\frac{\alpha_{k}n^{k}+\alpha_{k-1}n^{k-1}+\cdots+\alpha_{0}}{(qn+r_{1})\cdots
(qn+r_{m})},$$ where $\alpha_{0},\alpha_{1}, \cdots,\alpha_{k}\in
\overline{\mathbb{Q}} $, $r_1, \cdots, r_{m}$ are distinct positive
integers  and  $k\leq m-2$. Our main purpose  is to consider the
transcendence of $T$. By the reduction procedure given in Tijdeman
\cite{ti00,ti03}, we may restrict ourselves to the case that

i) $r_1, \cdots, r_{m}\ \mathrm{are}\ \mathrm{distinct}\
\mathrm{positive}\ \mathrm{integers}\ \leq q, \ \mathrm{gcd}(r_1,
\cdots, r_{m}, q)=1,$

ii) $\
\mathrm{gcd}(\alpha_{k}x^{k}+\alpha_{k-1}x^{k-1}+\cdots+\alpha_{0},(qx+r_{1})\cdots
(qx+r_{m}))=1.$

\noindent Therefore we need only consider the case $T=0$ by Theorem
B, which we shall assume from now on. By partial fractions, we get
$$
T=\sum_{n=0}^{\infty}\{\frac{A_{1}}{qn+r_{1}}+\frac{A_{2}}{qn+r_{2}}+\cdots+\frac{A_{m}}{qn+r_{m}}\},$$
where $A_{1}, \cdots, A_{m}\in \mathbb{Q}(\alpha_{0},\alpha_{1},
\cdots,\alpha_{k})$ are all nonzero numbers with

$$A_1+A_2+\cdots+A_m=0.$$
We define $f(n)$ for $n\geq0$ as follows:

\begin{eqnarray*}\label{eqn-ghf2}
f(n) = \left\{ \begin{array}{ll}
         A_1,  & n\equiv r_1\pmod{q}, \\
         \cdots & \cdots \\
         A_m,  & n\equiv r_m\pmod{q}, \\
         0,  & \mbox{ otherwise }.
        \end{array}
\right.
\end{eqnarray*}
Then $f(n)$ is a periodic function with period $q$ taking only $m$
non-zero values $f(r_1),f(r_2),\cdots,f(r_m)$ with
$$f(r_{1})+f(r_{2})+\cdots+f(r_{m})=0$$ and
$$T=\sum_{n=1}^\infty\frac{f(n)}{n}=0.$$
It is easy to see that $\mathbb{Q}(\alpha_{0},\alpha_{1},
\cdots,\alpha_{k})=\mathbb{Q}(A_{1}, A_{2},\cdots, A_{m})$. If
$\Phi_q$ is irreducible over $\mathbb{Q}(\alpha_{0},\alpha_{1},
\cdots,\alpha_{k})$, then $\Phi_q$ is irreducible over
$\mathbb{Q}(f(1),\cdots,f(q))$,  so (4), (5) and (6) are valid by
Theorem A and Lemma 2.1. We have
\begin{proposition} Suppose

$$T=\sum_{n=0}^{\infty}\frac{\alpha_{k}n^{k}+\alpha_{k-1}n^{k-1}+\cdots+\alpha_{0}}{(qn+r_{1})\cdots
(qn+r_{m})}=0,$$  where $r_1, \cdots, r_{m}$ are distinct positive
integers $\leq q$, $k\leq m-2$, and $\alpha_{0},\alpha_{1},
\cdots,\alpha_{k}\in \overline{\mathbb{Q}} $. Suppose $
\mathrm{gcd}(r_1, \cdots, r_{m}, q)=1,$  and $\
\mathrm{gcd}(\alpha_{k}x^{k}+\alpha_{k-1}x^{k-1}+\cdots+\alpha_{0},(qx+r_{1})\cdots
(qx+r_{m}))=1$ and $\Phi_q$ is irreducible over
  $\mathbb{Q}(\alpha_{0}, \cdots,\alpha_{k})$. Then there exists an $r_i$ with $1\leq i\leq m $ such that $\gcd(r_i, q)>1$.
\end{proposition}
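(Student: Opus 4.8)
The plan is to argue by contradiction, exploiting the Okada-type identities (4) and (6), which are available here precisely because $T=0$ and $\Phi_q$ is irreducible over $\mathbb{Q}(\alpha_0,\dots,\alpha_k)$. Suppose, contrary to the assertion, that $\gcd(r_i,q)=1$ for every $i$. Since $q>1$ we have $\gcd(q,q)=q>1$, so no $r_i$ can equal $q$; hence every $r_i$ lies in $J$, and the periodic function $f$ built above is supported entirely on residue classes coprime to $q$. In particular $f(r)=0$ for all $r\in L$ and $f(q)=0$, so that $f$ vanishes on all of $L'=L\cup\{q\}$.

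With this vanishing in hand, I would read off the conclusion directly from Okada's relation (4). For each $a\in J$ that identity reads
$$f(a)+\sum_{r\in L}f(r)A(r,a)+\frac{f(q)}{\varphi(q)}=0,$$
and since every term of the middle sum vanishes and $f(q)=0$, it collapses to $f(a)=0$ for all $a\in J$. But each $r_i$ belongs to $J$, and by the partial-fraction construction $f(r_i)=A_i\neq 0$ (the non-vanishing of the $A_i$ being exactly what the coprimality hypothesis $\gcd(\alpha_k x^k+\cdots+\alpha_0,(qx+r_1)\cdots(qx+r_m))=1$ guarantees). Taking $a=r_1$ yields $A_1=0$, a contradiction.

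An equivalent route uses the Decomposition Lemma's relation (6): for $a\in J$ one has $\sum_{m\in M}\frac{f(am)}{m}=0$, and I would observe that the only element of $M$ coprime to $q$ is $m=1$. Hence any $m\in M$ with $m>1$ satisfies $\gcd(am,q)=\gcd(m,q)>1$, which forces $am$ into a residue class outside $J$ and therefore $f(am)=0$. Only the $m=1$ term survives, and again $f(a)=0$ for all $a\in J$, contradicting $f(r_1)=A_1\neq 0$.

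There is no genuine analytic difficulty here; the content is the structural observation that, once $f$ is supported on residues prime to $q$, the ``non-coprime'' half of Okada's criterion becomes vacuous while the ``coprime'' half forces $f$ to vanish identically on $J$. The only points needing care are to confirm that under the contrary hypothesis the support of $f$ genuinely misses $L'$ (so that (4) and (6) degenerate as claimed) and to invoke the already-established fact that all partial-fraction coefficients $A_i$ are nonzero. Granting these, the contradiction is immediate, and the Proposition follows.
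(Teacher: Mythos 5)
Your argument is correct and is essentially the paper's own proof: under the contrary hypothesis $f$ vanishes on $L'$, so Okada's relation (4) collapses to $f(a)=0$ for all $a\in J$, contradicting $f(r_1)=A_1\neq 0$. The alternative route via (6) is a valid variant but adds nothing beyond the paper's one-line argument.
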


\begin{proof} By the above arguments, if all of $\{r_{1}, \cdots,
  r_{m}\}$ are coprime to $q$, then $f(r)=0$ for all $r\in L^{'}$.
  Applying (4) with $a\in J$ we have
$f(a)=0$ for all $a\in J,$ a contradiction.  This completes the
proof.\end{proof}

The main purpose of the present paper is to investigate the
transcendence of $T$ in the case that $m=4$, that is
$$T=\sum_{n=0}^\infty
  \frac{\alpha n^2+\beta
  n+\gamma}{(qn+r_1)(qn+r_2)(qn+r_3)(qn+r_4)}=
\sum_{n=0}^{\infty}\{\frac{A_1}{qn+r_{1}}+\frac{A_2}{qn+r_{2}}+\frac{A_3}{qn+r_{3}}+\frac{A_4}{qn+r_{4}}\},$$
and $f(n)$ is a periodic function with period $q$ taking only four
non-zero values $f(r_1)=A_1,f(r_2)=A_2,f(r_3)=A_3,f(r_4)=A_4$
satisfying
$$f(r_{1})+f(r_{2})+f(r_{3})+f(r_{4})=0\ \mathrm{and}
\ T=\sum_{n=1}^\infty\frac{f(n)}{n}=0.$$

 We divide the proof of  Theorem 1.1 into four
cases depending on the number $\rho$ of elements of  $\{r_1, r_2,
r_3, r_4 \}$ which are coprime to $q$. By Proposition 3.1, we have
$\rho\leq 3$. First suppose that $\rho=3$, then without loss of
generality we may assume that $\mathrm{gcd}(r_{1}, q)
>1$ and $\mathrm{gcd}(r_{2}r_{3}r_{4},q)=1$. If
$p|\mathrm{gcd}(r_{1}, q)$ and $p\nmid r_{i}$, $i=2,3,4$, then by
(5) we get $f(r_{1})\varepsilon(r_{1},p)=0$, and so $f(r_{1})=0$
since $\varepsilon(r_{1},p)\neq 0$, a contradiction. Consequently if
$T=\sum_{n=0}^\infty
  \frac{\alpha n^2+\beta
  n+\gamma}{(qn+r_1)(qn+r_2)(qn+r_3)(qn+r_4)}=0$ and
there exists an integer $r\in \{r_1, r_2, r_3, r_4 \}$ with
$\mathrm{gcd}(r,  q) >1$, then there exists at least another integer
$s \in \{r_1, r_2, r_3, r_4 \}\backslash\{r\}$ with $\mathrm{gcd}(r,
s, q)
>1$.

Now suppose $\rho=2$.  We have

\begin{proposition}
Suppose

$$T=\sum_{n=0}^\infty
  \frac{\alpha n^2+\beta
  n+\gamma}{(qn+r_1)(qn+r_2)(qn+r_3)(qn+r_4)}=0,$$
where  $r_1, r_2, r_3, r_4 $ are distinct positive integers $ \leq q
$ and $\alpha, \beta, \gamma \in \overline{\mathbb{Q}}$. Suppose
$\gcd(\alpha n^2+\beta
  n+\gamma, (qn+r_1)(qn+r_2)(qn+r_3)(qn+r_4))=1$,
$\mathrm{gcd}(r_{1}, r_{2},r_{3}, r_{4}, q)=1$ and $\Phi_q$ is
irreducible over
  $\mathbb{Q}(\alpha,\beta, \gamma)$. Suppose $\rho=2$.
  Then $T$ is transcendental except when
$$T=\sum_{n=0}^\infty
  \frac{16n^2+12n-1}{(4n+1)(4n+2)(4n+3)(4n+4)}$$
or $$T=
\sum_{n=0}^{\infty}\frac{36n^{2}+36n-1}{(6n+1)(6n+2)(6n+4)(6n+5)}.$$
\end{proposition}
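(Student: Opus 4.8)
The plan is to reformulate the hypothesis $T=0$ entirely in terms of the Decomposition Lemma (Lemma 2.1) applied to the period-$q$ function $f$ built from the partial-fraction coefficients $A_1,\dots,A_4$. Under the assumption $\rho=2$, exactly two of $r_1,r_2,r_3,r_4$ are coprime to $q$, so after relabelling I would write $\gcd(r_1,q)=\gcd(r_2,q)=1$ and $\gcd(r_3,q)>1$, $\gcd(r_4,q)>1$. The combinatorial remark already established just before the statement (that any $r$ with $\gcd(r,q)>1$ must share a common factor with another $r$, i.e.\ $\gcd(r_3,r_4,q)>1$) is the key structural input: it forces $r_3$ and $r_4$ to share a prime of $q$, which tightly constrains which moduli $q$ can occur. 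First I would exploit equation (5), $\sum_{r\in L'}f(r)\varepsilon(r,p)=0$ for each $p\in P$, noting that the only nonzero values of $f$ on $L'$ come from $r_3$ and $r_4$ (since $r_1,r_2\in J$ and $f(q)=0$ here as $q\notin\{r_i\}$ when all $r_i<q$, or is handled separately). This gives a linear relation $f(r_3)\varepsilon(r_3,p)+f(r_4)\varepsilon(r_4,p)=0$ for each relevant $p$, pinning down the ratio $f(r_3)/f(r_4)=A_3/A_4$.

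Next I would bring in equation (4) (equivalently (7) of the Decomposition Lemma), $\sum_{m\in M}\frac{f(am)}{m}=0$ for all $a\in J$, where $M$ is the set of integers composed only of primes dividing $q$. For each residue class $a$ coprime to $q$, this sum picks up contributions from those $am$ that land in the residue classes $r_1,r_2,r_3,r_4 \pmod q$. Because $r_1,r_2$ are coprime to $q$ while $r_3,r_4$ are not, the terms $f(r_1),f(r_2)$ appear only when $m$ is coprime to $q$ (forcing $m=1$ by the structure of $M$), whereas $f(r_3),f(r_4)$ appear for genuinely composite $m\in M$. Carefully bookkeeping these two families of contributions converts (4) into a finite system of equations relating $A_1,A_2,A_3,A_4$ with explicit rational coefficients built from the harmonic-type sums $\sum 1/m$ over the relevant $m\in M$. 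Lemma 2.3 (counting via $\varphi(n)/\varphi(d)$) would be invoked here to control, for each $a$, exactly which multiples $am$ fall into the non-coprime classes $r_3,r_4$, and with what multiplicity.

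The heart of the argument is then to show that this combined system --- the ratio constraint from (5) together with the family of relations from (4) as $a$ ranges over $J$, plus the normalization $A_1+A_2+A_3+A_4=0$ --- has no solution in nonzero $A_i$ unless $q$ and the residues $(r_1,r_2,r_3,r_4)$ are forced into one of the two exceptional configurations. I expect the main obstacle to be precisely this rigidity step: proving that the over-determined linear system forces $q\in\{4,6\}$ with the specific residue patterns $(1,3,2,4)$ for $q=4$ and $(1,5,2,4)$ for $q=6$. Concretely, I anticipate splitting on the prime factorization of $q$: if $q$ has a prime power $p^k$ with $k\geq 2$ or two distinct odd primes, the sums over $M$ acquire enough independent linear relations (from varying $a$ through distinct residues) to be incompatible with only four free parameters $A_i$, ruling out those $q$. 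This leaves a short finite list of candidate $q$ (small values with $q=2p$ or $q=4$), which I would dispose of by direct computation, verifying that only the two displayed sums survive and that in each surviving case the numerator $\alpha n^2+\beta n+\gamma$ is uniquely determined (up to scaling) by back-substituting the $A_i$, yielding $16n^2+12n-1$ and $36n^2+36n-1$ respectively.

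In writing up, I would present the reduction via (4) and (5) cleanly first, then isolate the finiteness claim on $q$ as the crux, handling the generic incompatibility through a rank argument on the linear system and clearing the finitely many small $q$ by explicit verification; the transcendence conclusion in all non-exceptional subcases then follows immediately from Theorem B, since any nonzero value of such a $T$ is automatically transcendental.
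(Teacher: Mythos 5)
Your overall strategy---encode $T=0$ via the Decomposition Lemma, use (5) to relate the values of $f$ on the two residues not coprime to $q$, use (6) over $a\in J$ together with Lemma 2.3, and force $q\in\{4,6\}$---is the same route the paper takes. The genuine gap is at the step you yourself identify as the crux: the finiteness claim on $q$. Your proposed dichotomy (``$p^k\mid q$ with $k\ge 2$ or two distinct odd primes'' versus ``a short finite list of small $q$, namely $q=2p$ or $q=4$, disposed of by direct computation'') does not close the argument, because $q=2p$ ranges over an \emph{infinite} family and cannot be cleared by computation. This is exactly the hardest subcase in the paper ($d=\gcd$ of the two non-coprime residues equal to $2$, with $2\,\|\,q$), and the mechanism that kills it is not a rank count on the linear system in $A_1,\dots,A_4$: one must first prove a nonvanishing Claim ($f(c)\ne 0$ whenever $r\equiv 2^k c\pmod q$ with $c\in J$, proved by comparing the geometric series arising in (6) with the relation $f(r_1)+f(r_2)=0$ from (5)), and then observe that the powers $2^i$, $i=1,\dots,u_0$, with $u_0$ the multiplicative order of $2$ modulo $q/2$, produce $u_0$ \emph{distinct} elements of $J$ on which $f$ is nonzero. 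Since $f$ is nonzero on only two elements of $J$, this forces $u_0\le 2$, hence $q\in\{2,6\}$. Without this (or an equivalent) argument, your plan leaves all $q=2p$ unresolved.

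A second, smaller inaccuracy: for $4\mid q$, $q>4$, the contradiction in the paper is not a linear-algebra incompatibility either; it comes from Diophantine identities in the exponents, e.g.\ $\alpha_0+1=2^{u_0-l'+1}/(2^{u_0}-1)$ and $2^{l'-1}(1-2^{-u_0})=1$, obtained by combining (5) at $p=2$ with (6) at carefully chosen $a$ (such as $a=R_1$ where $r_1=2R_1$). So the ``over-determined system'' heuristic is the right intuition for the easy subcases ($\varphi(d)>2$, or $9\mid q$, where three distinct elements of $J$ are forced to carry the same nonzero value of $f$), but it is not a proof for the cases that actually produce the two exceptional sums, and the infinite family $2\,\|\,q$ is where your write-up, as planned, would fail.
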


\begin{proof}  Suppose $\rho=2$. Without loss of
generality we may assume that $\mathrm{gcd}(r_{1}, q)>1$,
$\mathrm{gcd}(r_{2}, q)>1$ and $\mathrm{gcd}(r_{3}r_{4}, q)=1$. By
the above arguments, we have $\mathrm{gcd}(r_{1}, r_{2}, q)=d >1$.

If $\varphi(d)>2$, we let $$a_{i}\equiv r_{3}+i \cdot\frac{q}{d}\
(\mathrm{mod}\ q) ,\ 0<a_{i}\leq q,\ i=0,1, \cdots ,d-1.$$ By Lemma
2.3, there are precisely $ \varphi(n)/\varphi(n/d)\geq\varphi(d)$
numbers in $\{a_{0},a_{1},\cdots,a_{d-1}\}$ which are coprime to $ q
$. Since $\varphi(d)>2$, there exist distinct $ a_{i_{0}} $, $
a_{j_{0}} $ such that $ a_{i_{0}}\neq r_3 $, $ a_{j_{0}}\neq r_3 $,
 and $
\mathrm{gcd}(a_{i_{0}},q)= \mathrm{gcd}(a_{j_{0}},q)=
  1 $. Applying (6) with $a=r_3$, $a=a_{i_{0}}$ and $a=a_{j_{0}}$, we
get
$$ \sum_{m\in M} \frac{f(r_{3}m)}{m} =f(r_3)+\sum_{r_{3}m\equiv r_{1}\
(\mathrm{mod}\ q) \atop m\in M} \frac{f(r_{1})}{m}
+\sum_{r_{3}m\equiv r_{2}\ (\mathrm{mod}\ q)\atop m\in M}
\frac{f(r_{2})}{m}=0 ,
 $$
 $$ \sum_{m\in M} \frac{f(a_{i_{0}}m)}{m}=f(a_{i_{0}})+\sum_{a_{i_{0}}m\equiv r_{1}\
(\mathrm{mod}\ q) \atop m\in M} \frac{f(r_{1})}{m}
+\sum_{a_{i_{0}}m\equiv r_{2}\ (\mathrm{mod}\ q)\atop m\in M}
\frac{f(r_{2})}{m}=0 ,
 $$
 $$ \sum_{m\in M} \frac{f(a_{j_{0}}m)}{m}=f(a_{j_{0}})+\sum_{a_{j_{0}}m\equiv r_{1}\
(\mathrm{mod}\ q) \atop m\in M} \frac{f(r_{1})}{m}
+\sum_{a_{j_{0}}m\equiv r_{2}\ (\mathrm{mod}\ q)\atop m\in M}
\frac{f(r_{2})}{m}=0 .
 $$
Observe that for every $m\in M$, we have $$ r_{3}m\equiv r_{i}\
(\mathrm{mod}\ q)\Longleftrightarrow a_{i_{0}}m\equiv r_{i}\
(\mathrm{mod}\ q)\Longleftrightarrow a_{j_{0}}m\equiv r_{i}\
(\mathrm{mod}\ q), \quad i=1,2.$$ It follows that $
f(r_{3})=f(a_{i_{0}})= f(a_{j_{0}})\neq 0 $, which contradicts to
our assumptions.

 Now we consider the case
$\varphi(d)\leq 2$, that is $ d=2,3,4,6 $.

\,{\bf Case 1.}\,  $ d=2 $. First we consider the subcase of $ 2 \|
q $. If $ 2 \| q $, we choose $u_0$  to be the smallest positive
integer such that $2^{u_0}\equiv 1\
 (\mathrm{mod}\ \frac{q}{2})$.
It is easy to see that $\varepsilon(r_1,2)=\varepsilon(r_2,2)=2$,
applying (5) with $p=2$, we get
\begin{equation}
f(r_1)+f(r_2)=0.
\end{equation}

Now we prove the following Claim:

{\bf Claim:} If there are positive integers $k$ and $c\in J$ such
that $r_1\equiv2^{k}c \ (\mathrm{mod}\ q)$, then $f(c)\neq0$.

 Otherwise, if $f(c)=0$, applying (6) with $a=c$ we have
\begin{equation}\label{equ_9}
 \sum_{m\in M} \frac{f(cm)}{m}=\sum_{cm\equiv r_{1}\ (\mathrm{mod}\ q) \atop
m\in M} \frac{f(r_{1})}{m} +\sum_{cm\equiv r_{2}\ (\mathrm{mod}\
q)\atop m\in M} \frac{f(r_{2})}{m}=0.
 \end{equation}
Since $\mathrm{gcd}(r_{1}, r_{2}, q)=2$, then $cm\equiv r_{i}\
(\mathrm{mod}\ q) $, $i=1,2$ can occur only when $m=2^{x}$ for some
positive integer $x$. If the congruence $r_2\equiv2^{x}c \
(\mathrm{mod}\ q)$ has no solution $x$, then by (8), we have
$$f(r_1)\sum_{cm\equiv r_{1}\ (\mathrm{mod}\ q) \atop m\in M}
\frac{1}{m}=0,$$ and so $f(r_1)=0$, a contradiction. If the
congruence $r_2\equiv2^{x}c \ (\mathrm{mod}\ q)$ has solutions, we
take $l$ to be the smallest positive integer solution, then all
positive solutions can be expressed as $l+tu_0$, $t=0,1,2,\cdots$.
  Let $k_0$ be the smallest positive
integer solution of the congruence $r_1\equiv2^{k}c \ (\mathrm{mod}\
q)$. Then (8) becomes
\begin{equation}
\frac{f(r_1)}{2^{k_0}}\frac{1}{1-2^{-u_0}}+\frac{f(r_2)}{2^{l}}\frac{1}{1-2^{-u_0}}=0.
\end{equation}
Combining (7) and (9), we get $k_0=l$, which implies that $r_1\equiv
r_2 \ (\mathrm{mod}\ q)$, a contradiction. We have proved the Claim.

For given positive integers $n,a,$ and $i$ with
$\mathrm{gcd}(a,q)=1$, since $ 2 \| q $, then the congruence
$2^{n}a\equiv2^{i}x_i\ (\mathrm{mod}\ q)$ has precisely one solution
$x_{i}$ such that $0<x_i<q$ and $\mathrm{gcd}(x_i,q)=1$. On the
other hand, if $1\leq i<j\leq u_0$, then $x_{i}\neq x_{j}$. Indeed,
if $2^{i}x_i\equiv 2^{j}x_i\equiv 2^{n}a \ (\mathrm{mod}\ q)$, it
follows that $2^{j-i}\equiv1 \ (\mathrm{mod}\ \frac{q}{2})$,
$u_0|j-i$, a contradiction.
 Let $ r_{1}=2^{k}R_{1},\ r_{2}=2^{l}R_{2} $,
 where $ k,l,R_{1},R_{2} $ are positive integers and $ \mathrm{gcd}(R_{1}R_{2},q) = 1 $.
Let $x_i$ be the unique solution of congruence

$$2^{k}R_{1}\equiv2^{i}x_i\ (\mathrm{mod}\ q),\ 0<x_i<q,\ \mathrm{gcd}(x_i,q)=1,\ i=1,2,\cdots,u_0.$$
By the Claim and the above arguments we have $f(x_i)\neq0$,
$i=1,2,\cdots,u_0$, $\mathrm{gcd}(x_i,q)=1$ and $x_{i}\neq x_{j}\
(i\neq j)$, and so $u_{0}\leq2$ since we have $f(x)=0$ for $x\in
J\backslash \{r_3,r_4\}$. If $u_{0}=1$, then $q=2$, a contradiction.
If $u_{0}=2$, then $q=6$. Without loss of generality we may assume
that $r_1=2,r_2=4,r_3=1,r_4=5$. Applying (5) with $p=2$ and (6) with
$a=r_3$ and $a=r_4$, we have
\begin{eqnarray*}
\left \{\begin{array}{l} f(r_1)+f(r_2)=0,\\
f(r_3)+\frac{f(r_1)}{2}\frac{1}{1-2^{-2}}+\frac{f(r_2)}{4}\frac{1}{1-2^{-2}}=0,\\
f(r_4)+\frac{f(r_1)}{4}\frac{1}{1-2^{-2}}+\frac{f(r_2)}{2}\frac{1}{1-2^{-2}}=0.
\end{array}\right.
\end{eqnarray*}
Hence
$$f(r_2)=-f(r_1),f(r_3)=-\frac{1}{3}f(r_1),f(r_4)=\frac{1}{3}f(r_1).$$
By Lemma 2.1 we get
$$T=\frac{1}{3}f(r_1)\sum_{n=0}^{\infty}\{\frac{3}{6n+2}-\frac{3}{6n+4}-\frac{1}{6n+1}+\frac{1}{6n+5}\}$$
$$=\frac{2}{3}f(r_1)\sum_{n=0}^{\infty}\frac{36n^{2}+36n-1}{(6n+1)(6n+2)(6n+4)(6n+5)}=0.$$

Next we consider the case that $ q=4 $, without loss of generality
we may assume that $r_1=2,r_2=4,r_3=1,r_4=3$. Applying (5) with
$p=2$ and (6) with $a=r_3$ and $a=r_4$, we have
\begin{eqnarray*}
\left \{\begin{array}{l} f(r_1)+3f(r_2)=0,\\
f(r_3)+\frac{f(r_1)}{2}+\frac{f(r_2)}{4}\frac{1}{1-\frac{1}{2}}=0,\\
f(r_4)+\frac{f(r_1)}{2}+\frac{f(r_2)}{4}\frac{1}{1-\frac{1}{2}}=0.
\end{array}\right.
\end{eqnarray*}
Hence
$$f(r_1)=-3f(r_2),f(r_3)=f(r_2),f(r_4)=f(r_2).$$
By Lemma 2.1 we have
$$T=f(r_2)\sum_{n=0}^\infty\{\frac{-3}{4n+2}+\frac{1}{4n+4}+\frac{1}{4n+1}+\frac{1}{4n+3}\}$$
$$=f(r_2)\sum_{n=0}^\infty
  \frac{16n^2+12n-1}{(4n+1)(4n+2)(4n+3)(4n+4)}=0.$$

Now we deal with the case $ 4|q$ and $q>4$. Since
$d=\mathrm{gcd}(r_{1},r_{2}, q)=2$, $ 4|q$, without loss of
generality we may assume that $q=2^{\alpha_{0}}Q$, $r_{1}=2R_{1}$,
and $r_{2}=2^{l}R_{2} $,
 where $Q,l,R_1,R_2,\alpha_{0}$ are positive integers, $\alpha_{0}\geq2$,  $2\nmid Q$, $l\geq1$ and $\mathrm{gcd}(R_{1}R_{2}, q)=1$.
Let $$a_{i}\equiv r_{3}+i\cdot \frac{q}{2}\ (\mathrm{mod}\ q) ,\
0<a_{i}\leq q ,\ i=0,1 .$$ Since $ 4| q$, then
$\mathrm{gcd}(a_{0}a_{1},q)=1$. Note that $$\{m\in M|\ ma_0\equiv
r_i \ (\mathrm{mod}\ q)\}=\{m\in M|\ ma_1\equiv r_i \ (\mathrm{mod}\
q)\}, \ i=1,2.$$ Applying (6) with $a=a_0$ and $a=a_1$ we get
$$f(a_0)=f(a_1).$$
Since $a_0=r_3$ and $f(x)=0$ for $x\in J\backslash \{r_3,r_4\}$, we
have
 $a_1=r_4$ and $f(r_{3})=f(r_{4})$. Note
that
$$M_1=\{m\in M|\ R_{1}m\equiv r_1=2R_{1}\ (\mathrm{mod}\ q)
\}=\{2\}$$ and
$$M_2=\{m\in M|\ R_{1}m\equiv r_2\ (\mathrm{mod}\ q) \}=\{2^{n}\in M|\ R_{1}2^{n}\equiv r_2\ (\mathrm{mod}\ q)\}.$$
If the congruence $r_{2}\equiv 2^{x}R_{1}\ (\mathrm{mod}\ q)$ has no
solution, then by applying (6) with $a=R_1$ we get $
f(R_{1})+\frac{f(r_{1})}{2}=0 $, and so $
f(R_{1})=-\frac{f(r_{1})}{2} \neq0$, it follows that  $
f(R_{1})=f(r_{4})=f(r_{3})=-\frac{f(r_{1})}{2}$. Since
$f(r_1)+f(r_2)+f(r_3)+f(r_4)=0$, so $f(r_2)=0$, a contradiction. Now
we assume that $l^{'}$ is the smallest positive solution of the
congruence $ r_{2}\equiv 2^{x}R_{1}\ (\mathrm{mod}\ q)$. Let $u_0$
be the smallest positive integer such that $2^{u_0}\equiv 1\
 (\mathrm{mod}\ Q)$.
We consider the following four subcases.

(i) If $ f(R_{1})=0 $ and $ l\geq \alpha_{0}$. Applying (5) with
$p=2$ and (6) with $a=R_1$, we get
\begin{eqnarray*}
\left \{\begin{array}{l} f(r_{1})+(\alpha_{0} +1)f(r_{2})=0,\\
\frac{f(r_{1})}{2}+\frac{f(r_{2})}{2^{l^{'}}}\frac{1}{1-2^{-u_{0}}}=0,
\end{array}\right.
\end{eqnarray*}
then $ \alpha_{0} +1=\frac{2^{u_{0}-l^{'}+1}}{2^{u_{0}}-1} $, and so
$\alpha_{0}=l^{'}=u_{0}=1$, a contradiction.

(ii) If $ f(R_{1})=0 $ and $ l<\alpha_{0}$, then $l=l^{'}$ and
$M_2=\{2^{l^{'}}\}$. Applying (5) with $p=2$ and (6) with $a=R_1$,
we get
\begin{eqnarray*}
\left \{\begin{array}{l} f(r_{1})+lf(r_{2})=0,\\
\frac{f(r_{1})}{2}+\frac{f(r_{2})}{2^{l^{'}}}=0,
\end{array}\right.
\end{eqnarray*}
then $ l=\frac{1}{2^{l^{'}-1}}$, and so $ l=l^{'}=1 $ and $
r_{2}\equiv 2R_{1}\equiv r_{1}\ (\mathrm{mod}\ q)$, a contradiction.

(iii) If $ f(R_{1})\neq 0 $ and  $ l\geq \alpha_{0}$. Similarly, we
have
\begin{eqnarray*}
\left \{\begin{array}{l} f(r_{3})=f(r_{4}),\\
f(r_{1})+(\alpha_{0}+1)f(r_{2})=0,\\
f(r_{1})+f(r_{2})+f(r_{3})+f(r_{4})=0,\\
f(r_{3})+\frac{f(r_{1})}{2}+\frac{f(r_{2})}{2^{l^{'}}}\frac{1}{1-2^{-u_{0}}}=0,
\end{array}\right.
\end{eqnarray*}
then $2^{l^{'}-1}(1-2^{-u_0})=1$, and so $u_0=1,l^{'}=2,q=4$, a
contradiction.

(iv) If $ f(R_{1})\neq 0 $ and $ l<\alpha_{0}$, then $M_1=\{2\}$ and
$M_2=\{2^{l^{'}}\}$. Similarly, we have
\begin{eqnarray*}
\left \{\begin{array}{l} f(r_{3})=f(r_{4}),\\ f(r_{1})+lf(r_{2})=0,\\
f(r_{1})+f(r_{2})+f(r_{3})+f(r_{4})=0,\\
f(r_{3})+\frac{f(r_{1})}{2}+\frac{f(r_{2})}{2^{l^{'}}}=0,
\end{array}\right.
\end{eqnarray*}
then $2^{l{'}}=2$, $l^{'}=1 $, $l=l^{'}=1 $ by the definition of $l$
and $l^{'}$, and so $ r_{2}\equiv 2R_{1}=r_{1}\ (\mathrm{mod}\ q)$,
again a contradiction.

\,{\bf Case 2.}\,  $ d=3 $. Let
$$ a_{j}\equiv
r_{3}+j\frac{q}{3}\ (\mathrm{mod}\ q),\  0< a_{j}\leq q,\ j=0,1,2,$$
and let $$M_{ij}=\{m\in M \ |\ ma_j\equiv r_i\ (\mathrm{mod}\ q)\},\
i=1,2,\ j=0,1,2.$$

If $ 9| q $, then $$M_{10}=M_{11}=M_{12},\ M_{20}=M_{21}=M_{22}.$$
Applying (6) with $a=a_0,a_1$ and $ a_2$, we have
$$f(r_{3})=f(a_{0})=f(a_{1})=f(a_{2}),$$ and $a_0,a_1,a_2$ are
distinct, which contradicts to the fact that $f(x)=0$ for $x\in
J\backslash \{r_3,r_4\}$.

If $ 3 \| q $, then by Lemma 2.3 we can choose $a_{j_0}\in
\{a_1,a_2\}$ such that $\mathrm{gcd}(a_{j_0},q)=1$. Similarly, we
have $$f(a_{0})=f(a_{j_0}),$$ so $a_{j_0}=r_4$ and
$f(r_{3})=f(r_{4})$. Applying (5) with $p=3$, we get
$f(r_{1})+f(r_{2})=0$. Combining with $f(r_{3})=f(r_{4})$,
$f(r_{1})+f(r_{2})+f(r_{3})+f(r_{4})=0$, we have $ f(r_{3})=0 $, a
contradiction.

The cases $d=4$ and $d=6$ are similar to  $ d=3 $,  and we omit the
details. This completes the proof.\end{proof}

\begin{proposition}
 Suppose that
$$
T=\sum_{n=0}^\infty
  \frac{\alpha n^2+\beta
  n+\gamma}{(qn+r_1)(qn+r_2)(qn+r_3)(qn+r_4)}=0,
$$
where $r_1, r_2, r_3, r_4 $ are distinct positive integers $ \leq q
$ and $\alpha, \beta, \gamma \in \overline{\mathbb{Q}}$. Suppose
$\gcd(\alpha n^2+\beta
  n+\gamma, (qn+r_1)(qn+r_2)(qn+r_3)(qn+r_4))=1$
, $\mathrm{gcd}(r_{1}, r_{2},r_{3}, r_{4}, q)=1$ and $\Phi_q$ is
irreducible over
  $\mathbb{Q}(\alpha,\beta, \gamma)$. Then $\rho\neq1$.
\end{proposition}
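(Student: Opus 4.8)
The plan is to exploit the fact that, when $\rho = 1$, the function $f$ vanishes on $J$ except at a single residue. After relabelling I may assume $\gcd(r_4, q) = 1$ while $\gcd(r_i, q) > 1$ for $i = 1, 2, 3$, so that $f(a) = 0$ for every $a \in J \setminus \{r_4\}$. The first step is to simplify the decomposition identity (6). For $a \in J$ the nonzero terms of $\sum_{m \in M} f(am)/m$ come only from those $m \in M$ with $am \equiv r_i \pmod q$ for some $i$. If $am \equiv r_4 \pmod q$ with $m \in M$, then $\gcd(m, q) = \gcd(am,q) = \gcd(r_4, q) = 1$ forces $m = 1$ and hence $a = r_4$. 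Therefore, for every $a \in J$ with $a \neq r_4$, equation (6) reads
$$\sum_{i=1}^{3} f(r_i)\, c_i(a) = 0, \qquad c_i(a) = \sum_{\substack{m \in M \\ am \equiv r_i \!\!\pmod q}} \frac{1}{m},$$
while for $a = r_4$ the left side carries the extra term $f(r_4)$. Together with $f(r_1) + f(r_2) + f(r_3) + f(r_4) = 0$ and all $f(r_i) \neq 0$, these are the relations I would play against one another.

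The main case is when $d := \gcd(r_1, r_2, r_3, q) \geq 3$. Here I would run the shift construction used in Proposition 3.2, but applied to the coprime residue $r_4$: set $a_j \equiv r_4 + j\,(q/d) \pmod q$ with $0 < a_j \leq q$, for $j = 0, 1, \dots, d-1$. Since $d \mid r_i$ and $d \mid q$ for each $i$, any solution $m$ of $r_4 m \equiv r_i \pmod q$ satisfies $d \mid m$ (because $\gcd(r_4, d) = 1$), and then $j\,(q/d)\,m \equiv 0 \pmod q$; consequently $a_j m \equiv r_i \pmod q \Leftrightarrow r_4 m \equiv r_i \pmod q$ for every $m \in M$ and every $i$. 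Comparing (6) for $a = r_4$ with (6) for $a = a_j$, the geometric sums $c_i(a_j)$ agree with $c_i(r_4)$, whence $f(a_j) = f(r_4)$. By Lemma 2.3 at least $\varphi(d) \geq 2$ of the $a_j$ are coprime to $q$; as $a_0 = r_4$ is one of them, there is some $a_{j_0} \neq r_4$ coprime to $q$ with $f(a_{j_0}) = f(r_4) \neq 0$, contradicting $f \equiv 0$ on $J \setminus \{r_4\}$.

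It remains to treat $d \in \{1, 2\}$, the only values with $\varphi(d) < 2$. In these cases the shift no longer preserves all three congruences simultaneously, so I would instead extract linear relations among $f(r_1), f(r_2), f(r_3)$ from equation (5) applied to each prime $p \in P$ — using $\varepsilon(r_i, p) = v_p(q) + \tfrac{1}{p-1}$ when $p^{v_p(q)} \mid r_i$ and $\varepsilon(r_i, p) = v_p(r_i)$ otherwise — and then feed the resulting ratios into one carefully chosen instance of the reduced (6) to force some $f(r_i) = 0$. For $d = 2$ (all three $r_i$ even) this should mirror the $2 \| q$ and $4 \mid q$ subcases in the proof of Proposition 3.2. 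For $d = 1$ the residues share their prime factors only pairwise, so there are at least two primes in $P$ and (5) already supplies two independent relations; a single evaluation of a geometric sum $c_i(a)$ for a shifted coprime $a$ then closes the argument, exactly as in the model computation $q = 6$, $\{r_1,r_2,r_3\} = \{2, 3, 6\}$, $r_4 \in \{1, 5\}$, where (5) gives $f(r_2) + f(r_3) = 0$ and $f(r_1)+f(r_2)=0$, and (6) with $a = 5$ then forces $f(r_3) = 0$.

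I expect the main obstacle to be precisely this residual analysis for $d \in \{1, 2\}$. Unlike the clean case, it requires explicit evaluation of the congruence-restricted geometric series $c_i(a)$ — the same objects underlying $A(r, a)$ in Okada's theorem — together with a verification that, in every configuration permitted by the fact established above that each prime dividing $\gcd(r_i,q)$ must divide another $r_j$, equation (6) contributes a nonzero coefficient. Organizing the finitely many prime-sharing patterns so that no case is overlooked, and handling the higher-prime-power subtleties of $\varepsilon$, is where the real bookkeeping lies.
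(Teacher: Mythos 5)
Your argument for the main case $d=\gcd(r_1,r_2,r_3,q)\geq 3$ is correct and even a little slicker than what the paper does there (the paper never splits on the triple gcd in this proposition; it splits on the pairwise gcds $\gcd(r_i,r_j,q)$). The reduction of (6) for $a\in J\setminus\{r_4\}$, the verification that $c_i(a_{j})=c_i(r_4)$ for the shifted residues coprime to $q$, and the conclusion $f(a_{j_0})=f(r_4)\neq 0$ are all sound. The problem is that everything after that is a program rather than a proof, and the cases you defer are exactly where the content of this proposition lives.

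Concretely: for $d=1$ your plan (``two relations from (5) plus one carefully chosen instance of (6)'') is not carried out and is not obviously sufficient. Note first that $d=1$ together with the fact (established before Proposition 3.2) that every prime dividing some $\gcd(r_i,q)$ must divide another $r_j$ with $j\leq 3$ forces some pair to satisfy $\varphi(\gcd(r_i,r_j,q))>1$; the paper's treatment of this situation is not a direct linear solve but a structurally different move: shift by an $a_{i_0}\equiv 1\pmod{\gcd(r_i,r_j,q)}$ coprime to $q$, apply Lemma 2.2, and subtract from $T$ to obtain a new vanishing four-term sum $T'$ in which exactly two residues are coprime to $q$. That sum is then fed back into Proposition 3.2, whose two exceptional identities ($q'=4$ and $q'=6$) must be explicitly excluded (by $q>4$, and by playing (5) with $p=3$ against the coefficient pattern $\pm 3$). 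Nothing in your proposal produces this reduction or confronts the exceptional identities, and your model computation at $q=6$ does not indicate how the general configuration (several primes, unequal $\varepsilon$-weights, higher prime powers) is closed off. For $d=2$ the situation is more benign than you suggest --- since all three of $r_1,r_2,r_3$ are even, (5) with $p=2$ gives $f(r_1)+f(r_2)+f(r_3)=0$ when $2\,\|\,q$, and the shift by $1+q/2$ (which fixes all even residues) disposes of $4\mid q$ --- but your pointer to the $2\,\|\,q$ and $4\mid q$ subcases of Proposition 3.2's proof is to the wrong machinery (those concern two non-coprime residues and the $u_0$-analysis), and in any event the argument is not written. As it stands the proposal proves the statement only when $\gcd(r_1,r_2,r_3,q)\geq 3$.
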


\begin{proof}  Suppose $\rho=1$. Without loss of
generality we may assume that $ \mathrm{gcd}(r_{i}, q)>1 $,
$i=1,2,3$, and $ \mathrm{gcd}(r_{4}, q)=1 $.

First we consider the case that there exist distinct integers $
r_{i}, r_{j}\in\{r_{1}, r_{2}, r_{3}\}$, such that $
\varphi(\mathrm{gcd}(r_{i}, r_{j}, q))>1$, say $\varphi
(\mathrm{gcd}(r_{2} ,r_{3},q))>1$. Let
$$a_{i}=1+i\cdot\frac{q}{\mathrm{gcd}(r_{2}
,r_{3},q)},\ i=0,1,\cdots,\mathrm{gcd}(r_{2} ,r_{3},q)-1.$$ By Lemma
2.3, we may choose $a_{i_{0}}$ such that $a_{i_{0}}\neq
 1$ and $\mathrm{gcd}(a_{i_{0}},q)=1 $. Applying Lemma 2.2 with
 $k=a_{i_0}$, we have
\begin{equation}\label{equ_9}
\sum_{n=1}^{\infty} \frac{f(a_{i_{0}}n)}{n}=
\sum_{n=1}^{\infty}{\{\frac{f(r_{1})}{nq+r_{1}^{'}}+\frac{f(r_{2})}{nq+r_{2}}+\frac{f(r_{3})}{nq+r_{3}}+\frac{f(r_{4})}{nq+r_{4}^{'}}\}}=0,
\end{equation}
where $r_{1}^{'}\equiv a_{i_{0}}^{-1}r_{1}$, $r_{4}^{'}\equiv
 a_{i_{0}}^{-1}r_{4}\ (\mathrm{mod}\ q )$ and $0< r_{1}^{'},r_{4}^{'}< q $.
Obviously $r_{4}\neq r_{4}^{'}$ since $a_{i_0}\not \equiv 1 \
(\mathrm{mod}\ q )$ and $\mathrm{gcd}(r_{4}, q)=1$. Subtracting $T$
from (10), we obtain
$$T^{'}=\sum_{n=1}^{\infty}{\{\frac{f(r_{1})}{nq+r_{1}^{'}}-\frac{f(r_{1})}{nq+r_{1}}+\frac{f(r_{4})}{nq+r_{4}^{'}}}-\frac{f(r_{4})}{nq+r_{4}}\}=0.$$
If $r_{1}=r_{1}^{'}$, then $T^{'}=f(r_4)
\sum_{n=1}^{\infty}\{\frac{1}{r_{4}^{'}}-\frac{1}{r_4}\}\neq 0$, a
contradiction. If $r_{1}\neq r_{1}^{'}$, then there are precisely
two integers $r_{4},r_{4}^{'}$ in
$\{r_1,r_{1}^{'},r_{4},r_{4}^{'}\}$ which are coprime to $q$. By
Proposition 3.2 we have

$$T^{'}=\sum_{n=0}^\infty\{\frac{-3}{4n+2}+\frac{1}{4n+4}+\frac{1}{4n+1}+\frac{1}{4n+3}\},\ \ q=4,\ $$
or
 $$T^{'}=\sum_{n=0}^\infty\{\frac{3}{6n+2}+\frac{-3}{6n+4}+\frac{-1}{6n+1}+\frac{1}{6n+5}\}, \ q=6.$$
The first equality is impossible since $q>4$. If the second equality
holds, then $q=6$, $\{r_2,r_3\}=\{3,6\}$ and $3\not| r_1$. Applying
(5) with $p=3$, we get $f(r_2)+f(r_3)=0$, which implies that
$f(r_1)+f(r_4)=0$. But in the second equality we have
$f(r_1)=3f(r_4)$ or $f(r_1)=-3f(r_4)$, a contradiction.

Now we assume that $ \varphi{(\mathrm{gcd}(r_{i},r_{j},q))}\leq1 $
for all distinct integers $ r_{i}, r_{j}\in{\{r_{1}, r_{2},
r_{3}\}}$, then $
\mathrm{gcd}(r_{1},r_{2},q)=\mathrm{gcd}(r_{1},r_{3},q)=\mathrm{gcd}(r_{2},r_{3},q)=2
$.

(i) If $ 2\|q $, then applying (5) with $p=2$, we have $
f(r_{1})+f(r_{2})+f(r_{3})=0 $, and so $f(r_{4})=0$, a
contradiction.

(ii) If $ 4|q $, let $ a_1=1+\frac{q}{2}$, then $a_{1}\neq1$ ,
$\mathrm{gcd}(a_{1},q)=1$, and $a_{1}r_{i}\equiv r_{i}\
(\mathrm{mod}\ q)$, $i=1,2,3 $. Applying Lemma 2.2 with
 $k=a_{1}$, we have

\begin{equation}
\sum_{n=1}^{\infty}\frac{f(a_{1}n)}{n}=\sum_{n=0}^{\infty}{\{\frac{f(r_{1})}{nq+r_{1}}+\frac{f(r_{2})}{nq+r_{2}}+\frac{f(r_{3})}{nq+r_{3}}+\frac{f(r_{4})}{nq+r_{4}^{'}}\}}=0
,\end{equation}
 where $r_{4}^{'}\equiv r_{4}+\frac{q}{2}\
(\mathrm{mod}\ q)$ and $0<r_{4}^{'}<q$. Subtracting $T$ from (11),
we obtain
$\sum_{n=0}^{\infty}{\{\frac{f(r_{4})}{nq+r'_{4}}-\frac{f(r_{4})}{nq+r_{4}}}\}=0,$
which  contradicts to $r_{4}^{'}\neq r_{4}$. The proof is complete.
\end{proof}

\begin{proposition}
  Suppose that
$$
T=\sum_{n=0}^\infty
  \frac{\alpha n^2+\beta
  n+\gamma}{(qn+r_1)(qn+r_2)(qn+r_3)(qn+r_4)}=0,
$$
where $r_1, r_2, r_3, r_4 $ are distinct positive integers $ \leq q
$ , and $\alpha, \beta, \gamma \in \overline{\mathbb{Q}}$. Suppose
$\gcd(\alpha n^2+\beta
  n+\gamma, (qn+r_1)(qn+r_2)(qn+r_3)(qn+r_4))=1$
, $\mathrm{gcd}(r_{1}, r_{2},r_{3}, r_{4}, q)=1$ and $\Phi_q$ is
irreducible over
  $\mathbb{Q}(\alpha,\beta, \gamma)$. Then $\rho\neq0$.
\end{proposition}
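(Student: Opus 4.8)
The plan is to argue by contradiction: assume $\rho=0$, so that $\gcd(r_i,q)>1$ for every $i$ while $\gcd(r_1,r_2,r_3,r_4,q)=1$, and recall that $T=0$ forces both (5) and (6) to hold, with $f(a)=0$ for all $a\in J$. For a prime $p\mid q$ let $c_p$ denote the number of indices $i$ with $p\mid r_i$. The hypothesis $\gcd(r_1,r_2,r_3,r_4,q)=1$ gives $c_p\le 3$, and since $\rho=0$ every root is divisible by at least one $p\in P$. I would organise the proof around the size of the $c_p$.

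First I would dispose of the unbalanced primes. If some $p$ with $p\,\|\,q$ has $c_p\in\{1,3\}$, then (5) reads $\sum_{p\mid r_i}f(r_i)=0$; together with $f(r_1)+f(r_2)+f(r_3)+f(r_4)=0$ this forces one of the nonzero values $f(r_j)$ to vanish, a contradiction. If instead $p^2\mid q$, the multiplier $k=1+q/p$ is coprime to $q$ and satisfies $k\not\equiv1\ (\mathrm{mod}\ q)$; a direct computation shows $kr_i\equiv r_i\ (\mathrm{mod}\ q)$ precisely when $p\mid r_i$. Thus by Lemma 2.2 the sum $\sum_{n\ge1}f(kn)/n$ again vanishes, and subtracting $T$ cancels every term coming from a $p$-divisible root. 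When $c_p=3$ exactly one residue is displaced, so the difference collapses to $f(r_j)\sum_{n\ge0}\{(qn+r_j')^{-1}-(qn+r_j)^{-1}\}$ with $r_j'\not\equiv r_j\ (\mathrm{mod}\ q)$, which is a nonzero multiple of a strictly monotone digamma difference and hence cannot vanish. Therefore for every $p\in P$ we may assume $c_p=2$.

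It remains to treat the balanced configuration in which every prime of $q$ divides exactly two of the roots. Here I would still invoke Lemma 2.2, but now choosing $k$ to fix the two roots sharing a fixed prime $p$ while displacing the other two: taking $v_p(k-1)$ large enough to fix the $p$-divisible pair yet strictly smaller than $v_p(q)$ to move the complementary pair (possible exactly because both fixed roots are divisible by $p$) and $k\equiv1$ modulo the remaining prime-power factors, one obtains from $\sum_{n\ge1}f(kn)/n=T=0$ a vanishing sum $T''$ supported on the two displaced residues and their images. The aim is then to show $T''\neq0$, which I would reinforce by applying (6) at a suitable $a\in J$ supplied by Lemma 2.3 to force one of the $f(r_i)$ to vanish.

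The hard part will be exactly this balanced case. Unlike the situation in Proposition 3.3, multiplication by an element of $J$ preserves $\gcd(\cdot,q)$, so a bad root never becomes coprime to $q$; consequently the reduced sum $T''$ does not fall under the two-coprime-residue hypothesis of Proposition 3.2, and I cannot simply quote that result. The contradiction must instead be extracted from the interplay of (5), the explicit weights $\varepsilon(r,p)$, and the geometric-series evaluations of (6) over $M$, showing that no assignment of nonzero values $f(r_1),\dots,f(r_4)$ with zero sum can satisfy all of these relations simultaneously when $\rho=0$.
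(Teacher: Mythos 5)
Your opening reduction is sound and runs parallel to Case~1 of the paper's argument: for a prime $p\,\|\,q$ with $c_p\in\{1,3\}$, relation (5) does collapse to $\bigl(1+\tfrac{1}{p-1}\bigr)\sum_{p\mid r_i}f(r_i)=0$ and kills one of the values; for $p^2\mid q$ with $c_p=3$ the multiplier $k=1+q/p$ fixes exactly the $p$-divisible roots (and the displaced image $r_j'$ cannot collide with them since $\gcd(r_j',q)=\gcd(r_j,q)$ is prime to $p$), so the two-term difference is nonzero. Two small repairs are needed even here: the case $p^2\mid q$ with $c_p=1$ is not covered by either of your branches (it is, however, immediate from (5), since then only one root has $\varepsilon(r_i,p)\neq0$); and the conclusion should be $c_p\in\{0,2\}$, not $c_p=2$, since a prime of $q$ need not divide any $r_i$.

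The genuine gap is the balanced case, which is where the entire difficulty of the proposition lives, and your proposal does not prove it --- you explicitly concede that the reduced identity $T''=0$ does not fall under Proposition~3.2 and that the contradiction ``must be extracted from the interplay'' of (5) and (6), without exhibiting that extraction. Concretely: (a) your multiplier $k$ with $v_p(k-1)=v_p(q)-1$ does not exist when $p=2$ and $2\,\|\,q$ (every unit is $\equiv1\pmod 2$), which is precisely the residual configuration the paper must fight through (e.g.\ $q=12$, $\{r_1,r_2\}=\{3,9\}$); (b) even when $k$ exists, the resulting four-term identity $\sum_n\{f(r_3)((nq+r_3)^{-1}-(nq+r_3')^{-1})+f(r_4)((nq+r_4)^{-1}-(nq+r_4')^{-1})\}=0$ is a new instance of the same problem with all residues still non-coprime to $q$, and ruling it out is the bulk of the paper's Case~2: one first shows some pair satisfies $\varphi(\gcd(r_i,r_j,q))>1$ (your balanced hypothesis alone does not give this), then performs a \emph{second} displacement to isolate a single value $f(r_4)$ over four residues of equal gcd with $q$, divides out that gcd, and invokes Proposition~3.1, Theorem~C, and Proposition~3.2 --- the last of which forces a careful elimination of the exceptional identities at $q'=4$ and $q'=6$ via (5). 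None of this machinery appears in your sketch, so as written the argument establishes only the analogue of the paper's Case~1 and leaves the main case open.
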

\begin{proof} Suppose $\rho=0$. We divide the
proof into two cases.

\,{\bf Case 1.}\, There exist distinct integers
$r_{i},r_{j},r_{k}\in\{r_{1},r_{2},r_{3},r_{4}\}$ such that $
\mathrm{gcd}(r_{i},r_{j},r_{k},q)>1$, say,
$d=\mathrm{gcd}(r_{1},r_{2},r_{3},q)>1$. Let
$$a_{i}=1+i\cdot\frac{q}{d},\ i=0,1,\cdots,d-1.$$ If $\varphi(d)>1$,  we may choose $a_{i_{0}}\in\{a_0,a_1,\cdots,a_{d-1}\}$ such
that $a_{i_{0}}\neq1$ and $a_{i_{0}}\in J$ by Lemma 2.3. Note that
$$a_{i_{0}}r_{j}\equiv r_{j}\ (\mathrm{mod}\ q),\ j=1,2,3.$$
Applying Lemma 2.2 with
 $k=a_{i_0}$, we obtain
\begin{equation}
\sum_{n=1}^{\infty}\frac{f(a_{i_{0}}n)}{n}=
\sum_{n=0}^{\infty}{\{\frac{f(r_{1})}{nq+r_{1}}+\frac{f(r_{2})}{nq+r_{2}}+\frac{f(r_{3})}{nq+r_{3}}+\frac{f(r_{4})}{nq+r_{4}^{'}}}\}=0,\end{equation}
where $r_{4}^{'}\equiv a_{i_{0}}^{-1}r_{4}\ (\mathrm{mod}\ q)$ and
$0<r_{4}^{'}<q $. It is easy to check that $r_{4}\neq r_{4}^{'}$
since $\mathrm{gcd}(r_4,d,q)=1$. Subtracting the second equality of
(12) from $T$, we have
$\sum_{n=0}^{\infty}{\{\frac{f(r_{4})}{nq+r_{4}}-\frac{f(r_{4})}{nq+r_{4}^{'}}}\}=0
$, which is impossible since $r_{4}\neq r_{4}^{'}$.

If $\varphi(d)=1$, then $d=2=\mathrm{gcd}(r_{1},r_{2},r_{3},q)$.

(i) If $ 2\|q $, applying (5) with $p=2$ we get $
f(r_{1})+f(r_{2})+f(r_{3})=0 $, and so $f(r_{4})=0$, a
contradiction.

(ii) If $ 4|q $, we take $ b_{1}=1+\frac{q}{2}$, then
$\mathrm{gcd}(b_{1},q)=1$, and $b_{1}r_{i}\equiv r_{i}\
(\mathrm{mod}\ q)$, $i=1,2,3 $. Applying (6) with $a=b_1$ we have

\begin{eqnarray*}
\sum_{n=1}^{\infty}\frac{f(b_{1}n)}{n}=\sum_{n=0}^{\infty}{\{\frac{f(r_{1})}{nq+r_{1}}+\frac{f(r_{2})}{nq+r_{2}}+\frac{f(r_{3})}{nq+r_{3}}+\frac{f(r_{4})}{nq+r_{4}^{'}}\}}=0
,\end{eqnarray*}
 where $r_{4}^{'}\equiv b_{1}^{-1}r_{4}\
(\mathrm{mod}\ q)$ and $0<r_{4}^{'}< q$. Similarly, $r_{4}^{'}\neq
r_4$. Subtracting the above second equality from $T$, we obtain
$\sum_{n=0}^{\infty}{\{\frac{f(r_{4})}{nq+r_{4}}-\frac{f(r_{4})}{nq+r_{4}^{'}}}\}=0
$, which is also impossible since $r_{4}\neq r_{4}^{'}$.

\,{\bf Case 2.}\, If  $ \mathrm{gcd}(r_{i},r_{j},r_{k},q)=1$ for all
distinct $r_{i},r_{j},r_{k}\in{\{r_{1},r_{2},r_{3},r_{4}\}}$, then
there exist distinct $r_{i},r_{j}\in{\{r_{1},r_{2},r_{3},r_{4}\}}$
such that $ \varphi(\mathrm{gcd}(r_{i},r_{j},q))>1$, say, $
\varphi(\mathrm{gcd}(r_{1},r_{2},q))>1$. Otherwise, we would have
$gcd(r_i, \, r_j, q)=1$ or 2 for all distinct $r_i, \, r_j \in\{r_1,
\, r_2, \, r_3, \, r_4\}$,  and it would  follow that
$\gcd(r_1,q)\gcd(r_2,q)\gcd(r_3,q)\gcd(r_4,q)$ has only one prime
divisor 2 by the argument of the paragraph above Proposition 3.2,
and this would mean that  $\gcd(r_1, \, r_2, \, r_3, \, r_4,\, q)=2$
since $\rho=0$, contradicting  our assumptions. Let
$$c_{i}=1+i\cdot\frac{q}{\mathrm{gcd}(r_{1},r_{2},q)},\ i=0,1, \cdots,
\mathrm{gcd}(r_{1},r_{2},q)-1.$$ By Lemma 2.3 we can choose
$c_{i_{0}}$ such that $c_{i_{0}}\neq 1$ and
$\mathrm{gcd}(c_{i_{0}},q)=1$. Note that $c_{i_{0}}r_{j}\equiv
r_{j}\ (\mathrm{mod}\ q)$, $j=1,2$.  Similarly, applying Lemma 2.2
with
 $k=c_{i_{0}}$
and subtracting, we obtain
$$T^{'}=\sum_{n=0}^{\infty}{\{\frac{f(r_{3})}{nq+r_{3}}-\frac{f(r_{3})}{nq+r_{3}^{'}}}+\frac{f(r_{4})}{nq+r_{4}}-\frac{f(r_{4})}{nq+r_{4}^{'}}\}=0,$$
where $r_{3}^{'}\equiv c_{i_{0}}^{-1}r_{3}\pmod{q}$,
$r_{4}^{'}\equiv c_{i_{0}}^{-1}r_{4}\ (\mathrm{mod}\ q)$,
$0<r_{3}^{'},r_{4}^{'}<q $. Note that $r_{3}\neq r_{3}^{'}$,
$r_{4}\neq r_{4}^{'}$,
$\mathrm{gcd}(r_{3},q)=\mathrm{gcd}(r_{3}^{'},q)$ and
$\mathrm{gcd}(r_{4},q)=\mathrm{gcd}(r_{4}^{'},q)$.

(i) If $\mathrm{gcd}(r_{3},r_4,q)=1$. Since $
\mathrm{gcd}(r_{3},q)>1$ and $ \mathrm{gcd}(r_{4},q)>1$, without
loss of generality, we may assume that $ \mathrm{gcd}(r_{3},q)>2$,
that is $\varphi(\mathrm{gcd}(r_{3},q))>1$. Let
$$d_{j}=1+j\cdot\frac{q}{\mathrm{gcd}(r_{3},q)},\ j=0,1,\cdots,\mathrm{gcd}(r_{3},q)-1.$$ Similarly, we can choose
$d_{j_{0}}\neq 1$ and $\mathrm{gcd}(d_{j_{0}},q)=1$ such that
$$\sum_{n=0}^{\infty}{\{\frac{f(r_{3})}{nq+r_{3}}-\frac{f(r_{3})}{nq+r_{3}^{'}}+\frac{f(r_{4})}{nq+r_{4}^{''}}-\frac{f(r_{4})}{nq+r_{4}^{'''}}\}}=0,$$
where $r_{4}^{''}\equiv d_{j_{0}}^{-1}r_{4}\pmod{q}$ ,
$r_{4}^{'''}\equiv d_{j_{0}}^{-1}r_{4}^{'}$ $(\mathrm{mod}\ q)$,
$0<r_{4}^{'},r_{4}^{''},r_{4}^{'''}< q $. It follows that
$$T^{''}=\sum_{n=0}^{\infty}{\{\frac{f(r_{4})}{nq+r_{4}}-\frac{f(r_{4})}{nq+r_{4}^{'}}-\frac{f(r_{4})}{nq+r_{4}^{''}}+\frac{f(r_{4})}{nq+r_{4}^{'''}}\}}=0.$$
 Note that $r_4^{'}\neq
r_{4}^{'''}$, $r_4\neq r_{4}^{''}$, $r_4\neq r_4'$, $r_4^{''}\neq
r_4^{'''}$ and
$\mathrm{gcd}(r_{4},q)=\mathrm{gcd}(r_{4}^{'},q)=\mathrm{gcd}(r_{4}^{''},q)=\mathrm{gcd}(r_{4}^{'''},q)$
since $\gcd(c_{i_0}d_{j_0}, q)=1$. Now we have
$$T^{''}=\frac{f(r_4)}{\mathrm{gcd}(r_{4},q)}\sum_{n=0}^{\infty}\{\frac{1}{nq^{'}+a}+\frac{-1}{nq^{'}+b}+\frac{-1}{nq^{'}+c}+\frac{1}{nq^{'}+e}\}=0,$$
where $q^{'}=\frac{q}{\mathrm{gcd}(r_4,
q)},a=\frac{r_4}{\mathrm{gcd}(r_4,
q)},b=\frac{r_{4}^{'}}{\mathrm{gcd}(r_4,
q)},c=\frac{r_{4}^{''}}{\mathrm{gcd}(r_4,
q)},e=\frac{r_{4}^{'''}}{\mathrm{gcd}(r_4, q)}$. Obviously all of
$a,b,c,e$ are coprime to $q'$ and $\Phi_{q^{'}}$ is irreducible over
$\mathbb{Q}$ .

It is easy to check that $a,b,c,e$ are distinct. Otherwise, since
$a\neq b$, $a\neq c$, $c\neq e$, $b\neq e$, we have $a=e$ or $b=c$.
If $a=e$ and $b=c$ both hold, then
$$T^{''}=\frac{f(r_4)}{\mathrm{gcd}(r_{4},q)}\sum_{n=0}^{\infty}\{\frac{2}{nq^{'}+a}+\frac{-2}{nq^{'}+b}\}\neq
0,$$ which is a contradiction. If $a=e$ and $b\neq c$, then by
Theorem C we have
$$T^{''}=\frac{f(r_4)}{\mathrm{gcd}(r_{4},q)}\sum_{n=0}^{\infty}\{\frac{2}{nq^{'}+a}+\frac{-1}{nq^{'}+b}+\frac{-1}{nq^{'}+c}\}\neq 0,$$
which is also a contradiction. Similarly, the case that $a\neq e$
and $b=c$ is impossible. Therefore $T^{''}=0$ is impossible by
Proposition 3.1.

(ii) If $\mathrm{gcd}(r_{3},r_4,q)>1$, applying (5) with some prime
$p$ satisfying $p|\mathrm{gcd}(r_{3},r_4,q)$, we get $$vf(r_{3})+u
f(r_{4})=0,$$ where $u,v$ are positive rational numbers, then we may
re-write $T^{'}$ as

$$T^{'}=\frac{f(r_4)}{\mathrm{gcd}(r_{3}, r_4, q)}\sum_{n=0}^{\infty}
{\{\frac{-\frac{u}{v}}{nq^{'}+a}+\frac{\frac{u}{v}}{nq^{'}+b}+\frac{1}{nq^{'}+c}+\frac{-1}{nq^{'}+e}\}},$$
where $q^{'}=\frac{q}{\mathrm{gcd}(r_{3}, r_4, q)},\
a=\frac{r_3}{\mathrm{gcd}(r_{3}, r_4, q)},\
b=\frac{r_{3}^{'}}{\mathrm{gcd}(r_{3}, r_4, q)},\
c=\frac{r_4}{\mathrm{gcd}(r_{3}, r_4, q)},\
e=\frac{r_{4}^{'}}{\mathrm{gcd}(r_{3}, r_4, q)}$. It is easy to see
that $\mathrm{gcd}(a,q^{'})=\mathrm{gcd}(b,q^{'}),\
\mathrm{gcd}(c,q^{'})=\mathrm{gcd}(e,q^{'})$, and
$\mathrm{gcd}(a,c,q^{'})=1$. Note that $\Phi_{q^{'}}$ is irreducible
over $\mathbb{Q}$ and $a, b, c, e$ are distinct integers by the same
arguments as above.

By Proposition 3.1, we have either $\mathrm{gcd}(a,q^{'})>1$ or
$\mathrm{gcd}(c,q^{'})>1$.

  If precisely
one of $\mathrm{gcd}(a,q^{'}),\mathrm{gcd}(c,q^{'})$ is 1, then
without loss of generality we may assume that
$\mathrm{gcd}(c,q^{'})=1$ and $\mathrm{gcd}(a,q^{'})>1$. Then by
Proposition 3.2 we have that
\begin{equation}T^{'}=\sum_{n=0}^\infty\{\frac{-3}{4n+2}+\frac{1}{4n+4}+\frac{1}{4n+1}+\frac{1}{4n+3}\}=0,\ q'=4,
\end{equation}
or
\begin{equation}T^{'}=\sum_{n=0}^\infty\{\frac{3}{6n+2}+\frac{-3}{6n+4}+\frac{-1}{6n+1}+\frac{1}{6n+5}\}=0,\
q'=6. \end{equation} (13) is impossible since $\{-3,1,1,1\}\neq
\{-\frac{hu}{v},\frac{hu}{v},h,-h\}$ for all $ h\in \mathbb{Q}$. If
(14) holds, then $q^{'}=6$, $\{a,b\}=\{2,4\}$ and $\frac{u}{v}=3$,
that is $q=6\mathrm{gcd}(r_3,r_4,q)$. Note that $\gcd(r_3,
q)=\gcd(r_3, r_4, q)\gcd(a,q')$ and $\gcd(\gcd(r_1, r_2, q),
\gcd(r_3, q))=1$. It follows that
$$\mathrm{gcd}(r_1,r_2,q)\mathrm{gcd}(a,q^{'})|6.$$
 Since
$\mathrm{gcd}(a,q^{'})>1$ and $\varphi(\mathrm{gcd}(r_1,r_2,q))>1$,
then $\mathrm{gcd}(a,q^{'})=2$ and $\mathrm{gcd}(r_1,r_2,q)=3$. If
$\mathrm{gcd}(r_3,r_4,q)\neq2$, then
$\varphi(\mathrm{gcd}(r_3,r_4,q))>1$. Similarly, using the same
argument as above we  obtain that $\mathrm{gcd}(r_3,r_4,q)=3$, a
contradiction. If $\mathrm{gcd}(r_3,r_4,q)=2$, then $q=12$ and
$\{r_1,r_2\}=\{3,9\}$. Applying (5) with $p=3$ we have
$f(r_1)+f(r_2)=0$. It follows that $f(r_3)+f(r_4)=0$ which
contradicts with $f(r_3)=-3f(r_4)$ since $\frac{u}{v}=3$.

If $\mathrm{gcd}(a,q^{'})>1,\mathrm{gcd}(c,q^{'})>1$. Since
$\mathrm{gcd}(a,c,q^{'})=1$,
$\mathrm{gcd}(a,q^{'})=\mathrm{gcd}(b,q^{'})$ and
$\mathrm{gcd}(c,q^{'})=\mathrm{gcd}(e,q^{'})$, then one of $\gcd(a,
b, q')$ and $\gcd(c,e, q')$ is larger than 2, say, $\gcd(a, b,
q')>2$. Let
$$l_j =1+j\cdot\frac{q'}{\gcd(a,b, q')}, j=1, \cdots, \frac{q'}{\gcd(a,b, q')}-1.$$
Similarly, we can choose $l_{j_0}\neq1$ and $\gcd(l_{j_0}, q')=1$
such that
$$\sum_{n=0}^{\infty}{\{\frac{-\frac{u}{v}}{nq^{'}+a}+\frac{\frac{u}{v}}{nq^{'}+b}+\frac{1}{nq^{'}+c'}+\frac{-1}{nq^{'}+e'}\}}=0,$$
where $c'\equiv l_{j_0}^{-1}c, e'\equiv l_{j_0}^{-1}e$, $0<c',
e'<q'$. It follows that
$$T_1=\sum_{n=0}^{\infty}{\{\frac{1}{nq^{'}+c}+\frac{1}{nq^{'}+e'}-\frac{1}{nq^{'}+c'}-\frac{1}{nq^{'}+e}\}}=0,$$
$c\neq c', e\neq e', \gcd(c, q')=\gcd(e, q')=\gcd(c'
q')=\gcd(e',q')>1$. The remaining argument is the same line as in
Case 2 (i). This completes the proof.\end{proof}

{\bf Proof of Theorem 1.1:} By the above propositions 3.1-3.4, we
have proven Theorem 1.1.

{\bf Acknowledgement:} The authors wish to thank the referee for
helpful comments on this paper.

\end{document}